\colorlet{xlinkcolor}{red!50!black}
\newtheorem{definition}{Definition}
\newtheorem{theorem}{Theorem}
\newtheorem{proposition}{Proposition}
\newtheorem{remark}{Remark}
\numberwithin{equation}{section}
\numberwithin{lemma}{section}
\numberwithin{definition}{section}
\numberwithin{assumption}{section}
\numberwithin{theorem}{section}
\numberwithin{proposition}{section}
\numberwithin{corollary}{section}
\numberwithin{remark}{section}
\setlist[description]{style=multiline,leftmargin=2em}
\setlist[itemize]{style=standard,leftmargin=2em}
\setlist[enumerate]{style=standard,leftmargin=2.2em,itemsep=3pt}
\title{\Large\bf
The robust isolated calmness of spectral norm regularized convex matrix optimization problems\thanks{This work was funded by
the National Natural Science Foundation of China (No. 12101101),
the China Postdoctoral Science Foundation (No. 2022M720631),
the Fundamental Research Funds for the Central Universities (Nos. 3132023204 and 3132024194).}}
\author{
Ziran Yin\thanks{School of Science, Dalian Maritime University, Dalian, Liaoning, 116026, China (\url{yinziran@dlmu.edu.cn}, \url{cxy11238556969@163.com}).}
\and
Xiaoyu Chen\footnotemark[2]
\and
Jihong Zhang\thanks{School of Science, Shenyang Ligong University, Shenyang, Liaoning, 110159, China (\url{zjh7815040x@163.com}).}
}
\date{}
\begin{document}
\maketitle

\begin{abstract}
	
This paper aims to provide a series of characterizations of the robust isolated calmness of the Karush-Kuhn-Tucker (KKT) mapping for spectral norm regularized convex optimization problems. By establishing the variational properties of the spectral norm function, we directly prove that the KKT mapping is isolated calm if and only if the strict Robinson constraint qualification (SRCQ) and the second-order sufficient condition (SOSC) hold. Furthermore, we obtain the crucial result that the SRCQ for the primal/dual problem and the SOSC for the dual/primal problem are equivalent. The obtained results can derive more equivalent conditions of the robust isolated calmness of the KKT mapping, thereby enriching the stability theory of spectral norm regularized optimization problems and enhancing the usability of isolated calmness in algorithm applications.

\bigskip
\noindent
{\bf Keywords:}
Isolated calmness,
Second-order sufficient condition,
Spectral norm,
Strict Robinson constraint qualification,
Critical cone
	
\medskip
\noindent
{\bf MSCcodes:}
90C25, 90C31, 65K10
\end{abstract}

\section{Introduction}
Consider the spectral norm regularized matrix optimization problem\\
\begin{equation}\label{p}
	\begin{array}{cl}
		\min\limits_{X\in \Re^{m\times n}} & h(\mathcal Q X)+\langle C,X\rangle+\|X\|_2\\
		{\rm s.t.} & \mathcal BX-b\in \mathcal P,
	\end{array}
\end{equation}
where $h:\Re^d\rightarrow \Re$ is twice continuously differentiable convex function and is essentially strictly convex,
$\mathcal Q: \Re^{m\times n}\rightarrow \Re^d$ and $\mathcal B:\Re^{m\times n}\rightarrow \Re^l$ are two linear operators, $C\in\Re^{m\times n}$ and $b\in\Re^l$ are given data and $\mathcal P\subseteq \Re^l$ is a given convex polyhedra cone. The function $\|X\|_2$ is the spectral norm function, namely, the largest singular value of $X$.
Without loss of generality, we suppose that $m\leq n$ in what follows.
For convenience, in the following text, we always use $\theta$ to represent the spectral norm function, i.e., $\theta(\cdot)=\|\cdot\|_2$.
Problem (\ref{p}) has a wide range of applications in various fields, such as matrix approximation problems \cite{GW1993}, matrix chebyshev polynomial problems \cite{TT1998} and $H_{\infty}$ synthesis problems \cite{AD2006,VDA2007,Yuan2024}, etc. Recently, spectral norm regularized problem has also been applied in deep learning and neural networks problems \cite{ANCP2023,YM2017,GYS2024}.

Stability analysis theory is very crucial in studying the convergence of algorithms. As one of the important concepts in stability properties, isolate calmness (see Definition \ref{def-isocalm}) may guarantee the linear convergence rate of some algorithms, such as the alternating direction method of multipliers \cite{HSZ2018} and the proximal augmented Lagrangian method \cite{Roc1976}. There are many publications on studying the  isolated calmness of the KKT mapping (ICKKTM) for optimization problems. For the nonlinear semidefinite programming, Zhang and Zhang \cite{ZZ2016} show that the ICKKTM can be derived from the SOSC and the SRCQ. Zhang et al. \cite{ZZW2017} demonstrate that the SOSC and the SRCQ are both sufficient and necessary for the ICKKTM for the nonlinear second order cone programming problem. It is important to note that Ding, Sun and Zhang \cite{DSZ2017} study the robust isolated calmness of a large class of cone programming problems. They prove that if and only if both the SOSC and the SRCQ hold, the KKT mapping is robustly isolated calm.

When the optimization problem has some kind of special linear structure, by establishing the equivalence between the constraint qualification of the primal (dual) problem and second order optimality condition of the dual (primal) problem, one can obtain more characterizations of stability properties. For instance, for the standard semidefinite programming problem, Chan and Sun \cite{CS2008} show that the constraint nondegeneracy for the primal (dual) problem is equivalent to the strong SOSC for the dual (primal) , then they obtain a series of equivalent characterizations of the strong regularity of the KKT point.
Han, Sun and Zhang \cite{HSZ2015ar} discover the equivalence relationship between the dual (primal) SRCQ and the primal (dual) SOSC for convex composite quadratic semidefinite programming problem, thereby deriving a series of equivalent conditions of ICKKTM.
For the case where $\theta$ in problem (\ref{p}) is the nuclear norm function, Cui and Sun \cite{CS2018} show that the primal (dual) SRCQ is equivalent to the dual (primal) SOSC. Therefore, they derive more equivalent conditions of the robust ICKKTM.

Inspired by the work in \cite{CS2018}, given the widespread application of spectral norm regularized convex matrix optimization problems, a natural question is whether the results in \cite{CS2018} can be extended to spectral norm regularized convex optimization problems. We provide an positive answer in this paper. Due to the special structure of the critical cone of spectral norm function, we provide several equivalent conditions of the robust ICKKTM. That is to say, the conclusions in \cite{CS2018} are still valid for problem (\ref{p}).
Compared to \cite{CS2018}, the difference in this paper is that we use the established variational properties of the proximal mapping of spectral norm to provide a direct proof of the ICKKTM for problem (\ref{p}), while \cite{CS2018} provides an indirect proof of the isolated calmness based on \cite[Theorem 24]{DSZ2017} regarding the optimization problems with a $\mathcal C^2$-cone reducible constraint set. In addition, compared to the SOSC, the SRCQ for primal or dual problem is easier to verify, which can enhance the practicality of isolated calmness in algorithm research.

The organization of the subsequent content is as follows. We provide some notations and preliminaries on variational analysis which will be used in the following text in Section 2. The variational properties of the spectral norm function are studied in Section 3, including the relationship between the critical cones of the spectral norm function and its conjugate function, and the explicit expression of the directional derivative of the proximal mapping of the spectral norm function. The results in Section 3 play a vital role in obtaining the important conclusions of this paper. In Section 4, we prove that the primal(dual) SRCQ holds if and only if the dual(primal) SOSC holds and thus establish more equivalent conditions for the robust ICKKTM for problem (\ref{p}). We conclusion this paper in Section 5.

Some common symbols and notations for matrices are as follows:

\begin{itemize}
	\item For any positive integer $t$, we denote by $[t]$ the index set $\{1,\cdots, t\}$.
	For any $Z\in \Re^{m\times n}$,  the $(i,j)$-th entry of $Z$ is denoted as $Z_{ij}$, where $i\in [m]$, $j\in[n]$.
	Let $\mu\subseteq[m]$ and $\nu\subseteq[n]$ be two index sets. We write $Z_{\nu}$ to be the sub-matrix of $Z$ that only retains all columns in $\nu$, and $Z_{\mu\nu}$ to be the $|\mu|\times|\nu|$ sub-matrix of $Z$ that only retains all rows in $\mu$ and columns in $\nu$.
	\item For any $d\in \Re^m$, ${\rm Diag}(d)$ represents the $m\times m$ diagonal matrix, where the $i$-th diagonal element is $d_i$, $i\in[m]$.
	\item Using ``trace" to represent the sum of diagonal elements in a given square matrix. For any two matrices $P$ and $Q$ in $\Re^{m\times n}$, the inner product of $P$ and $Q$ is written as  $\langle P,Q\rangle:={\rm trace}(P^TQ)$.
	The Hadamard product of matrices $P$ and $Q$ is represented by the symbol ``$\circ$", i.e., the $(i, j)$-th entry of $P\circ Q\in \Re^{m\times n}$ is $ P_{ij}Q_{ij}$.
	\item Let $\mathcal S^w$ be the linear space of all $w\times w$ real symmetric matrices, $\mathcal S_+^w$ and $\mathcal S_-^w$ be the cones of all $w\times w$ positive and negative semidefinite matrices, respectively.
\end{itemize}
\section{Notation and Preliminaries}
Let $\mathcal{X}$ and $\mathcal{Y}$ be two finite dimensional real Euclidean spaces. Given $\rho>0$, define the ball $\mathbb B_\rho(z):=\{z'\in \mathcal X~|~\|z'-z\|\leq \rho\}$, $\forall z\in \mathcal X$.
Let $D\subseteq \mathcal X$ be a non-empty closed convex set.
For any $z\in D$, the tangent cone (see \cite[Definition 6.1]{Row98}) of $D$ at $z$ is defined by ${\mathcal T}_D(z):=\{d\in \mathcal X~|~ \exists z^k\rightarrow z$ with $z^k\in D$ and  $\tau^k\downarrow 0$ s.t. $(z^k-z)/\tau^k\rightarrow d\}$.
We use $\mathcal N_D(x):=\{d\in \mathcal X:\langle d,z-x\rangle\leq 0~\forall z\in D\}$ to denote the normal cone of $D$ at $x\in D$.
Denote by $\delta_D(x)$ the indicator function over $D$, i.e., $\delta_D(x)=0$ if $x\in D$, and $\delta_D(x)=\infty$ if $x\notin D$.
Define the support function of the set $D$ as $\sigma(y, D):=\sup_{x\in D}\langle x,y\rangle, y\in \mathcal X$.
For a given $x\in \mathcal X$, define $\Pi_D(x):=\arg \min\{\|d-x\|~|~d\in D\}$ as the projection mapping onto $D$.
Suppose the function $f:\mathcal{X}\rightarrow (-\infty,+\infty]$ is proper closed convex, denote by ${\rm dom} f:=\{x\mid f(x)<\infty\}$ the effective domain of $f$, by $f^*$ the conjugate of $f$ and by $\partial f$ the subdifferential of $f$. For more details, one can refer standard convex analysis \cite{Row98}.
For any convex cone $K\subset \mathcal Y$, denote by $K^\circ:=\{z'\in \mathcal Y\mid \langle z',z\rangle\leq 0, \forall z\in K\}$ the polar of $K$.

The definition of isolated calmness below which is taken from \cite[Definition 2]{DSZ2017} is the most important concept in this paper.

\begin{definition}\label{def-isocalm}
	The set-valued mapping $G: \mathcal X\rightrightarrows \mathcal Y$ is said to be isolated calm at $\bar u$ for $\bar v$ if $\bar v\in G(\bar u)$ and there exist a positive constant $\kappa$ and neighborhoods $\mathcal U$ of $\bar u$ and $\mathcal V$ of $\bar v$ such that
	\begin{equation}\label{isocalm}
		G(u)\cap\mathcal V\subset\{\bar v\}+\kappa\|u-\bar u\|\mathbb B_\mathcal Y~~\forall u\in \mathcal U.
	\end{equation}
	Moreover, we say that $G$ is robustly isolated calm at $\bar u$ for $\bar v$ if (\ref{isocalm}) holds, and for each $u\in \mathcal U$, $G(u)\cap\mathcal V\neq\emptyset$.
\end{definition}

For any closed convex function $g: \mathcal X\rightarrow (-\infty,+\infty]$, we know from \cite[Proposition 2.58] {BS00} that  $g$ is directionally epidifferentiable. We use $g^\downarrow(x;\cdot)$ to denote the directional epiderivatives of $g$. Further, if $g^\downarrow(x;d)$ is finite for $x\in {\rm dom} \,g$ and $d\in \mathcal X$, we define the lower second-order directional epiderivative of $g$ for any $w\in \mathcal X$ by
\begin{equation*}
	g^{\downarrow\downarrow}_-(x;d,w):=\liminf\limits_{\tau\downarrow 0\atop w'\rightarrow w}\frac{g(x+\tau d+\frac{1}{2}\tau^2w')-g(x)-\tau g^\downarrow(x;d)}{\frac{1}{2}\tau^2}.
\end{equation*}
\section{Variational Analysis of the Spectral Norm Function}
In this section, we will provide a direct expression for the directional derivative of the proximal mapping of the spectral norm function. Then discuss the relationship between directional derivatives of the proximal mapping, the so-called ``sigma term" and the critical cones.

Given an arbitrary matrix $Q\in\Re^{m\times n}$, let $\sigma_1(Q)\geq\sigma_2(Q)\geq\cdots\geq\sigma_m(Q)$ be the singular values of $Q$. Denote $\sigma(Q):=(\sigma_1(Q),\sigma_2(Q),\cdots,\sigma_m(Q))^T$. For any integer $p>0$, let $\mathcal O^p$ be the set of all $p\times p$ orthogonal matrices. We assume that $Q\in\Re^{m\times n}$ admits the singular value decomposition (SVD) as follows:
\begin{equation}\label{svd}
	Q=U[ \begin{array}{cc}
		{\rm Diag}(\sigma(Q)) & 0 \\
	\end{array}
	]V^T=
	U[ \begin{array}{cc}
		{\rm Diag}(\sigma(Q)) & 0 \\
	\end{array}
	][ \begin{array}{cc}
		V_1 & V_2 \\
	\end{array} ]^T=U{\rm Diag}(\sigma(Q))V_1^T,
\end{equation}
	where $U\in \mathcal O^m$ and $[V_1\quad V_2]\in \mathcal O^n$ with $V_1\in \Re^{n\times m}$ and $V_2\in \Re^{n\times(n-m)}$. Define the following three index sets
\begin{equation}\label{indexabc}
	a:=\{1\leq i\leq m\mid\sigma_i(Q)>0\},\quad b:=\{1\leq i\leq m\mid\sigma_i(Q)=0\},\quad c:=\{m+1,\cdots,n\}.
\end{equation}

For any two matrices $P, W\in\mathcal S^n$, the well known Fan's inequality \cite{Fan1949} takes the form
\begin{equation}\label{fanineq}
	\langle P,W\rangle\leq \lambda(P)^T\lambda(W),
\end{equation}
where $\lambda(P)$ represents the eigenvalue vector of $P$ whose elements are arranged in nonincreasing order.

	For any $P,W\in \Re^{m\times n}$, by \cite[Theorem 31.5]{Roc70} we know that $W\in \partial\theta(P)$ (or, equivalently, $P\in \partial \theta^*(W)$) holds if and only if
\begin{equation*}
	{\rm Prox}_\theta(P+W)=P\iff{\rm Prox}_{\theta^*}(P+W)=W,
\end{equation*}
where ${\rm Prox}_\theta:\Re^{m\times n}\rightarrow\Re^{m\times n}$ denote the proximal mapping of $\theta$ (see, e.g., \cite[Definition 6.1]{beck2017}), namely,
\begin{equation*}\label{prox}
	{\rm Prox}_\theta(Z):=\arg{\min\limits_{Z'\in \Re^{m\times n}}}\left\{\theta(Z')+\frac{1}{2}\|Z'-Z\|^2\right\}, \quad Z\in\Re^{m\times n},
\end{equation*}
and ${\rm Prox}_{\theta^*}:\Re^{m\times n}\rightarrow\Re^{m\times n}$ denote the proximal mapping of $\theta^*$.
Denote $Q:=P+W$. Let $Q$ have the SVD as (\ref{svd}).
Then by \cite[Theorem 7.29 and Example 7.31]{beck2017} we have that
\begin{equation*}\label{moreaudes1}
	P=U[{\rm Diag}(\sigma(P))\quad 0]V^T,\quad W=U[{\rm Diag}(\sigma(W))\quad 0]V^T,
\end{equation*}
and
\begin{equation*}\label{moreaudes2}
	\sigma(P)=\kappa(\sigma(Q))\quad {\text{and}}\quad \sigma(W)=\sigma(Q)-\kappa(\sigma(Q)),
\end{equation*}
where $\kappa:\Re^m\rightarrow \Re^m$ is the proximal mapping of the $l_{\infty}$ norm (i.e., the maximum absolute value of the elements in a vector in $\Re^m$).

Define $\phi:\Re\rightarrow \Re$ as the following scalar function
\begin{equation*}\label{lowner}
	\phi(x):=\min\{x,\lambda^*\},\quad x\in\Re,
\end{equation*}
where $\lambda^*>0$ and satisfies $\sum\limits_{i=1}^m[\sigma_i(Q)-\lambda^*]_+=1$ (for any $x\in \Re$, $[x]_+$ means the nonnegative part of $x$).
Then, we can conclude that $\kappa(\sigma)=(\phi(\sigma_1),\phi(\sigma_2),\cdots,\phi(\sigma_m))$ and
\begin{equation*}\label{Ades}
	P={\rm Prox}_\theta(Q)=U[{\rm Diag}(\phi(\sigma_1),\phi(\sigma_2),\cdots,\phi(\sigma_m))\quad 0]V^T.
\end{equation*}
Obviously, ${\rm Prox}_\theta(\cdot)$ can be regarded as L\"{o}wner's operator related to $\phi$.

Let $\nu_1(Q),\nu_2(Q),\cdots,\nu_r(Q)$ be the nonzero different singular values of $Q$,
and there exists an integer $1\leq\tilde r\leq r$ such that
\begin{equation*}
	\nu_1(Q)>\nu_2(Q)>\cdots>\nu_{\tilde r}(Q)\geq \lambda^*>\nu_{\tilde r+1}(Q)>\cdots>\nu_r(Q)>0.
\end{equation*}
To proceed, we further define several index sets. We denote $a_{r+1}:=b$ for convenience.
Divide the set $a$ into subsets as follows:
\begin{equation*}\label{al}
	a_k:=\{i\in a\mid\sigma_i(Q)=\nu_k(Q)\},\quad k\in [r],
\end{equation*}
and define two index sets associated with $\lambda^*$:
\begin{equation}\label{alpha}
	\alpha:=\bigcup_{k=1}^{\tilde r}a_k,\quad \beta:=\bigcup_{k=\tilde r+1}^{r+1}a_k.
\end{equation}
Specifically, $\sigma_i(P)=\min\{\sigma_i(Q),\lambda^*\}$, $i=1,\cdots,m$, namely,
\begin{equation*}
	\sigma_i(P)=\left\{
	\begin{array}{ll}
		\lambda^*, & {\rm if}\; i\in \alpha, \\
		\sigma_i(Q), & {\rm if}\; i\in \beta.
	\end{array}\right.
\end{equation*}
In fact, $\lambda^*$ coincides with the largest singular value of $P$, namely, $\lambda^*=\|P\|_2$. According to Watson \cite{Watson1992}, the subdifferential of $\theta$ can be characterized as below:
\begin{equation}\label{subtheta}
	\partial \theta(P)=\{U_\alpha HV_{\alpha}^T,\forall H\in \mathcal S^{|\alpha|}_+,{\rm trace}(H)=1\}.
\end{equation}
Therefore, we know that $\sigma_i(W)=0$ when $i\in\beta$, and the set $\alpha$ can be divided into three subsets as follows:
\begin{equation}\label{alpha123}
	\alpha_1:=\{i\in \alpha\mid \sigma_i(W)=1\},\quad\alpha_2:=\{i\in \alpha\mid 0<\sigma_i(W)<1\},\quad \alpha_3:=\{i\in \alpha\mid \sigma_i(W)=0\}.
\end{equation}
Hence, there exist integers $0\leq {r_0}\leq 1$ and $\tilde r-1\leq {r_1}\leq \tilde r$ such that
\begin{equation*}
	\alpha_1=\bigcup_{k=1}^{ {r_0}}a_k,\quad \alpha_2=\bigcup_{k={r_0}+1}^{r_1}a_k,\quad \alpha_3=\bigcup_{k={r_1}+1}^{\tilde r}a_k.
\end{equation*}

Since the spectral norm function is globally Lipschitz continuous with modulus $1$ and convex, $\theta$ is directional differentiable at any point in $\Re^{m\times n}$. From (\ref{subtheta}), for any $D\in \Re^{m\times n}$, the directional derivative of $\theta$ at $P$ in the direction $D$ can be expressed as
\begin{equation}\label{dird}
	\theta'(P;D)=\sup\limits_{S\in\partial \theta(P)}\langle D,S\rangle=\|U_\alpha^TDV_\alpha\|_2.
\end{equation}
For $W\in \partial \theta(P)$, or, equivalently, $P\in\partial \theta^*(W)$, the critical cone of $\theta$ at $P$ for $W$ and the critical cone of $\theta^*$ at $W$ for $P$ are defined as
\begin{equation}\label{critconeA}
	\mathcal C_\theta(P,W):=\left\{D\in \Re^{m\times n}\mid \theta'(P;D)=\langle D,W\rangle\right\}
\end{equation}
and
\begin{equation}\label{critconeB}
	\mathcal C_{\theta^*}(W,P):=\left\{D\in \Re^{m\times n}\mid (\theta^*)'(W;D)=\langle D,P\rangle\right\}.
\end{equation}

Next we will give the expression for the directional derivative of ${\rm Prox}_\theta(\cdot)$. By \cite{DSST2018}, we know that ${\rm Prox}_\theta(\cdot)$ is directional differentiable and the directional derivative can be obtained via the directional derivative of $\phi$. Clearly the directional derivative of $\phi$ is
\begin{equation*}\label{phi'}
	\phi'(p;h)=\left\{
	\begin{array}{ll}
		0, & {\rm if}\; p>\lambda^*, \\[10pt]
		\min\{h,0\},&{\rm if}\; p=\lambda^*,\\[10pt]
		h, & {\rm if}\; p<\lambda^*.
	\end{array}\right.
\end{equation*}
	For any $Q\in \Re^{m\times n}$, denote $\Theta^2_{\alpha\alpha}:\Re^{m\times n}\rightarrow \Re^{|\alpha|\times|\alpha|}$, $\Theta^1_{\alpha\beta}:\Re^{m\times n}\rightarrow \Re^{|\alpha|\times|\beta|}$, $\Theta^2_{\alpha\beta}:\Re^{m\times n}\rightarrow \Re^{|\alpha|\times|\beta|}$ and $\Theta_{\alpha c}:\Re^{m\times n}\rightarrow \Re^{|\alpha|\times |c|}$ as
\begin{equation*}\label{Theta}
	\left\{
	\begin{array}{ll}
		(\Theta^2_{\alpha\alpha}(Q))_{ij}:=\displaystyle{\frac{2\lambda^*}{\sigma_i(Q)+\sigma_j(Q)}}, & i\in[|\alpha|], j\in[|\alpha|], \\[15pt]
		(\Theta^1_{\alpha\beta}(Q))_{ij}:=\displaystyle{\frac{\lambda^*-\sigma_{j+|\alpha|}(Q)}{\sigma_i(Q)-\sigma_{j+|\alpha|}(Q)}}, & i\in[|\alpha|], j\in[|\beta|], \\[15pt]
		(\Theta^2_{\alpha\beta}(Q))_{ij}:=\displaystyle{\frac{\lambda^*+\sigma_{j+|\alpha|}(Q)}{\sigma_i(Q)+\sigma_{j+|\alpha|}(Q)}}, & i\in[|\alpha|], j\in[|\beta|], \\[15pt]
		(\Theta_{\alpha c}(Q))_{ij}:=\displaystyle{\frac{\lambda^*}{\sigma_i(Q)}}, & i\in[|\alpha|], j\in[n-m].
	\end{array}\right.
\end{equation*}
Define two linear matrix operators $S:\Re^{p\times p}\rightarrow \mathcal S^p$ and $T:\Re^{p\times p}\rightarrow \Re^{p\times p}$ by
\begin{equation}\label{S&T}
	S(H):=\frac{1}{2}(H+H^T)\quad{\text {and}}\quad T(H):=\frac{1}{2}(H-H^T),\forall H\in\Re^{p\times p}.
\end{equation}
For all $Q\in \Re^{m\times n}$ and $D\in\Re^{m\times n}$, let $D=[D_1\quad D_2]$ with $D_1\in \Re^{m\times m}$ and $D_2\in\Re^{m\times (n-m)}$.
We define four matrix mappings $\Xi_1:\Re^{m\times n}\times \Re^{m\times n}\rightarrow \Re^{|\alpha|\times |\alpha|}$, $\Xi_2:\Re^{m\times n}\times \Re^{m\times n}\rightarrow \Re^{|\alpha|\times |\beta|}$, $\Xi_3:\Re^{m\times n}\times \Re^{m\times n}\rightarrow \Re^{|\beta|\times |\alpha|}$ and $\Xi_4:\Re^{m\times n}\times \Re^{m\times n}\rightarrow \Re^{|\alpha|\times |c|}$ by
\begin{equation}\label{Xi}
	\left\{
	\begin{array}{l}
		\Xi_1(Q,D):=\Theta^2_{\alpha\alpha}(Q)\circ T(D_1)_{\alpha\alpha},\\[15pt]
		\Xi_2(Q,D):=\Theta^1_{\alpha\beta}(Q)\circ S(D_1)_{\alpha\beta}+ \Theta^2_{\alpha\beta}(Q)\circ T(D_1)_{\alpha\beta},\\[15pt]
		\Xi_3(Q,D):=(\Theta^1_{\alpha\beta}(Q))^T\circ S(D_1)_{\beta\alpha}+ (\Theta^2_{\alpha\beta}(Q))^T\circ T(D_1)_{\beta\alpha}, \\[15pt]
		\Xi_4(Q,D):=\Theta_{\alpha c}(Q)\circ D_{\alpha c}. \\
	\end{array}
	\right.
\end{equation}
Therefore, according to \cite[Theorem 3]{DSST2018}, the directional derivative ${\rm Prox}_\theta'(Q;D)$ can be written as
\begin{equation}\label{prox'}
	{\rm Prox}_\theta'(Q;D)=U
	\left(
	\begin{array}{c;{2pt/2pt}c;{2pt/2pt}c}
		\Xi_1(Q,\widetilde D)+\Xi_{\alpha\alpha}(\widetilde D)
		& \Xi_2(Q,\widetilde D)&  \Xi_4(Q,\widetilde D) \\[15pt]
		\hdashline[2pt/2pt]
		&&\\
		\Xi_3(Q,\widetilde D) & {\widetilde D}_{\beta\beta} &{\widetilde D}_{\beta c}\\
	\end{array}
	\right)
	V^T,
\end{equation}
where $\widetilde D:=U^TDV$ and
\begin{equation}\label{Xialpha}
	\Xi_{\alpha\alpha}(\widetilde D):=\left(
	\begin{array}{ccc}
		0_{\alpha_1\times\alpha_1} &  &  \\
		& 0_{\alpha_2\times\alpha_2} &  \\
		&  & \Pi_{\mathcal S_-^{|\alpha_3|}}(S({\widetilde D}_{\alpha_3\alpha_3})) \\
	\end{array}
	\right).
\end{equation}

Theorem 3.1 in \cite{ZZX2013} shows that the singular value function is second order directionally differentiable in $\Re^{m\times n}$. This means that the spectral norm function is second order directional differentiable, and $\theta''(X;\cdot)=\theta^{\downdownarrows}(X;\cdot)$. To analyze the SOSC of the problem (\ref {p}), which helps to characterize the ICKKTM, we need to compute the ``sigma term" of the problem (\ref{p}), namely,
the conjugate function of the second order directional derivative of $\theta$.
For any $k=1,\cdots,r$, define $\Omega_{a_k}:\Re^{m\times n}\times \Re^{m\times n}\rightarrow \Re^{a_k\times a_k}$ by
\begin{equation*}
	\begin{split}
		\Omega_{a_k}(P,D):= & \left(S(\widetilde D_1)\right)^T_{a_k}\Big(\Sigma(P)-\nu_k(P)I_m\Big)^\dagger\left(S(\widetilde D_1)\right)_{a_k}-(2\nu_k(P))^{-1}\widetilde D_{a_kc}\widetilde D_{a_kc}^T \\
		& +\left(T(\widetilde D_1)\right)^T_{a_k}\Big(-\Sigma(P)-\nu_k(P)I_m\Big)^\dagger\left(T(\widetilde D_1)\right)_{a_k},
	\end{split}
\end{equation*}
where $Z^\dagger$ is the Moore-Penrose pseudo-inverse of $Z$, $I_m$ is the $m\times m$ identity matrix and $\widetilde D=[\widetilde D_1\quad \widetilde D_2]=[U^TDV_1\quad U^TDV_2]$.
By \cite{Ding2017}, we know that $\forall D\in \Re^{m\times n}$, the conjugate of $\theta''(P;D,\cdot)$ is
\begin{equation}\label{sigmaterm}
	\psi^*_{(P,D)}(W):=(\theta''(P;D,\cdot))^*(W)=\langle (\Sigma(W))_{\alpha\alpha},2\Omega_\alpha(P,D) \rangle
	=\sum_{k=1}^{r_1}\sigma_k(W){\rm trace}(2\Omega_{a_k}(P,D)),
\end{equation}
where $\sigma_k(W)=(\sigma_i(W))_{i\in a_k}$.
Clearly $\nu_k(P)=\lambda^*$ when $k=1,\cdots, r$. Continuing to calculate formula (\ref{sigmaterm}), we can get the explicit expression for $\psi^*_{(P,D)}(W)$ as follows
\begin{equation*}
	\begin{split}
		\psi^*_{(P,D)}(W)= &\sum_{\substack{1\leq l\leq r_1\\\widetilde r+1\leq t\leq r+1}}
		\frac{2\sigma_l(W)}{\nu_t(P)-\lambda^*}\|(S(\widetilde D_1))_{a_la_t}\|^2-\sum_{1\leq l\leq r_1}\frac{\sigma_l(W)}{\lambda^*}\|\widetilde D_{a_lc}\|^2 \\
		& +\sum_{\substack{1\leq l\leq r_1\\1\leq t\leq r+1}}
		\frac{2\sigma_l(W)}{-\nu_t(P)-\lambda^*}\|(T(\widetilde D_1))_{a_la_t}\|^2.
	\end{split}
\end{equation*}

We are now prepared to give the main conclusions of this section, that is, some properties of the critical cones and the conjugate function of second order directional derivative $\psi^*$.

Propositions 10 and 12 in \cite{Ding2017} characterized the critical cones of $\theta$ and $\theta^*$  defined in (\ref{critconeA}) and (\ref{critconeB}), respectively. We summarize the results as follows.
\begin{proposition}\label{crit12}
	Let $W\in\partial \theta(P)$ and $Q=P+W$ admit the SVD as in (\ref{svd}). Let the index sets $\alpha$, $\beta$, $\alpha_1$, $\alpha_2$, $\alpha_3$ and $c$ be defined as (\ref{indexabc}), (\ref{alpha}) and (\ref{alpha123}). Given any $D\in \Re^{m\times n}$, denote $\widetilde D=U^TDV$. Then the following results hold:
	\begin{enumerate}[align=left, label=\rm ({\roman*})]
		\item $D\in \mathcal C_\theta(P,W)$ if and only if there exists some $\tau\in\Re$ such that
		\begin{equation*}
			\lambda_{|\alpha_1|}(S(\widetilde D_{\alpha_1\alpha_1}))\geq \tau\geq \lambda_1(S(\widetilde D_{\alpha_3\alpha_3}))
		\end{equation*}
		and
		\begin{equation*}
			S(\widetilde D_{\alpha\alpha})=\left(
			\begin{array}{ccc}
				S(\widetilde D_{\alpha_1\alpha_1}) &  &  \\
				& \tau I_{|\alpha_2|} &  \\
				&  & S(\widetilde D_{\alpha_3\alpha_3}) \\
			\end{array}
			\right),
		\end{equation*}
		where $\lambda(Z)$ denote the real eigenvalue vector of symmetric matrix $Z$ being arranged in nonincreasing order.
		\item   $D\in \mathcal C_{\theta^*}(W,P)$ if and only if
		\begin{equation*}
			{\rm trace}(\widetilde D_{\alpha\alpha})=0,\quad S(\widetilde D_{\alpha_1\alpha_1})\in\mathcal S_-^{|\alpha_1|},\quad \widetilde D_{\alpha_3\alpha_3}\in \mathcal S_+^{|\alpha_3|}
		\end{equation*}
		and
		\begin{equation*}
			\widetilde D=\left(
			\begin{array}{ccccc}
				\widetilde D_{\alpha_1\alpha_1} & \widetilde D_{\alpha_1\alpha_2} & \widetilde D_{\alpha_1\alpha_3} & \widetilde D_{\alpha_1\beta} & \widetilde D_{\alpha_1c} \\[4pt]
				\widetilde D_{\alpha_2\alpha_1} & \widetilde D_{\alpha_2\alpha_2} & \widetilde D_{\alpha_2\alpha_3} & \widetilde D_{\alpha_2\beta} & \widetilde D_{\alpha_2c} \\[4pt]
				\widetilde D_{\alpha_3\alpha_1} & \widetilde D_{\alpha_3\alpha_2} & \widetilde D_{\alpha_3\alpha_3} & 0 & 0 \\[4pt]
				\widetilde D_{\beta\alpha_1} & \widetilde D_{\beta\alpha_2} & 0 & 0 & 0 \\[2pt]
			\end{array}
			\right).
		\end{equation*}
	\end{enumerate}
\end{proposition}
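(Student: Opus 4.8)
The plan is to prove both (i) and (ii) by reducing the critical cone condition to a statement about the symmetric block $S(\widetilde D_{\alpha\alpha})$ and then invoking the known characterization of the critical cone of the $l_\infty$-norm (equivalently, the max eigenvalue function on $\mathcal S^{|\alpha|}$), since $\|P\|_2 = \lambda^* = \sigma_i(P)$ for $i \in \alpha$ and $\partial\theta(P)$ is given by (\ref{subtheta}). The starting point is the formula (\ref{dird}) for the directional derivative $\theta'(P;D) = \|U_\alpha^T D V_\alpha\|_2 = \|\widetilde D_{\alpha\alpha}\|_2$ together with the observation that for $W \in \partial\theta(P)$ we have $W = U_\alpha \Sigma(W)_{\alpha\alpha} V_\alpha^T$ with $\Sigma(W)_{\alpha\alpha} \in \mathcal S_+^{|\alpha|}$ of trace one, so that $\langle D, W\rangle = \langle \widetilde D_{\alpha\alpha}, \Sigma(W)_{\alpha\alpha}\rangle = \langle S(\widetilde D_{\alpha\alpha}), \Sigma(W)_{\alpha\alpha}\rangle$ since $\Sigma(W)_{\alpha\alpha}$ is symmetric.

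For part (i): the condition $D \in \mathcal C_\theta(P,W)$ reads $\|\widetilde D_{\alpha\alpha}\|_2 = \langle S(\widetilde D_{\alpha\alpha}), \Sigma(W)_{\alpha\alpha}\rangle$. First I would argue that $\|\widetilde D_{\alpha\alpha}\|_2 \geq \lambda_1(S(\widetilde D_{\alpha\alpha}))$ always, and since $\Sigma(W)_{\alpha\alpha} \succeq 0$ has trace one, $\langle S(\widetilde D_{\alpha\alpha}), \Sigma(W)_{\alpha\alpha}\rangle \leq \lambda_1(S(\widetilde D_{\alpha\alpha}))$; combining these forces both to equal $\lambda_1$, and in particular $S(\widetilde D_{\alpha\alpha})$ must attain its spectral norm at its largest eigenvalue (not via a negative eigenvalue of large modulus). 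The subtlety is to decode which eigenvectors $\Sigma(W)_{\alpha\alpha}$ is supported on: by the definition of $\alpha_1, \alpha_2, \alpha_3$ in (\ref{alpha123}), $\Sigma(W)_{\alpha\alpha}$ in the chosen SVD basis has eigenvalue $1$ on the $\alpha_1$-block, values in $(0,1)$ on $\alpha_2$, and $0$ on $\alpha_3$ — but only \emph{after} allowing a further orthogonal rotation within eigenspaces of $\Sigma(Q)$. Pushing the equality condition through, one gets that $S(\widetilde D_{\alpha\alpha})$ is block diagonal with respect to the $\alpha_1/\alpha_2/\alpha_3$ partition, that on $\alpha_2$ it equals $\tau I_{|\alpha_2|}$ for the common eigenvalue $\tau = \lambda_1(S(\widetilde D_{\alpha\alpha}))$, and that $\lambda_{|\alpha_1|}(S(\widetilde D_{\alpha_1\alpha_1})) \geq \tau \geq \lambda_1(S(\widetilde D_{\alpha_3\alpha_3}))$ so that $\tau$ is genuinely the top eigenvalue of the whole block; conversely this structure gives $\langle S(\widetilde D_{\alpha\alpha}), \Sigma(W)_{\alpha\alpha}\rangle = \tau = \lambda_1(S(\widetilde D_{\alpha\alpha})) = \|\widetilde D_{\alpha\alpha}\|_2$ because $\Sigma(W)_{\alpha\alpha}$ has trace one and is supported on $\alpha_1 \cup \alpha_2$. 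Here I would lean on the cited Propositions 10 and 12 of \cite{Ding2017} — effectively this \emph{is} their content — so the job is mostly to present the reduction cleanly rather than reprove eigenvalue interlacing.

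For part (ii): $\mathcal C_{\theta^*}(W,P)$ is the critical cone of $\theta^*$, and $\theta^* = \delta_{\mathbb B_{*}}$ is the indicator of the nuclear-norm unit ball, so $\mathcal C_{\theta^*}(W,P) = \mathcal T_{\mathbb B_*}(W) \cap P^\perp$ where $P \in \mathcal N_{\mathbb B_*}(W)$. I would compute $(\theta^*)'(W;D)$ as the support-function-type expression and the condition $(\theta^*)'(W;D) = \langle D, P\rangle$; using that $P = U_\alpha(\lambda^* I)V_\alpha^T$ on the active block and the structure of the tangent cone to the nuclear ball at a point whose singular values are $(1,\dots,1,\text{something in }(0,1),\dots)$ indexed by $\alpha_1,\alpha_2,\alpha_3$, one reads off the stated block-sparsity pattern of $\widetilde D$ (the zeros in the $\alpha_3\beta$, $\alpha_3 c$, $\beta\alpha_3$, $\beta\beta$, $\beta c$, $c\beta$ positions), the trace condition ${\rm trace}(\widetilde D_{\alpha\alpha}) = 0$, and the semidefiniteness constraints $S(\widetilde D_{\alpha_1\alpha_1}) \preceq 0$, $\widetilde D_{\alpha_3\alpha_3} \succeq 0$. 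Again the honest route is to cite \cite[Propositions 10 and 12]{Ding2017} for the precise statement and simply reorganize.

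The main obstacle I anticipate is \emph{not} the eigenvalue inequalities but the bookkeeping around the orthogonal invariance: the SVD $(U,V)$ of $Q$ is not unique, the blocks $\alpha_1,\alpha_2,\alpha_3$ are defined via $W$'s singular values, and one must be careful that $W$ and $P$ are simultaneously diagonalized in the \emph{same} $(U,V)$ (which they are, by the Moreau-type identities recalled before (\ref{dird})), and that the characterizations are stated for \emph{any} admissible SVD. A secondary technical point is justifying $\|Z\|_2 = \langle S(Z), \Sigma(W)_{\alpha\alpha}\rangle \le \lambda_1(S(Z))$ with equality iff $Z$'s symmetric part has the claimed structure — this is where von Neumann/Fan's inequality (\ref{fanineq}) enters, and I would invoke it rather than reprove it. Given that the statement explicitly attributes the result to \cite{Ding2017}, the proof should be short: set up the reduction to the $\alpha\alpha$-block, quote the cited propositions, and translate their notation into the block form displayed here.
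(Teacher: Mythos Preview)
Your proposal is correct and matches the paper's approach: the paper does not prove this proposition at all but simply states it as a summary of Propositions~10 and~12 in \cite{Ding2017}, which is exactly what you propose to do after setting up the reduction to the $\alpha\alpha$-block. Your additional sketch of the underlying Fan-inequality argument is sound and goes beyond what the paper records, but the essential content---cite \cite{Ding2017} and translate notation---is the same.
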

\begin{remark}
	For the spectral norm function $\theta$, the $\tau$ in part (i) of proposition \ref{crit12} is actually the largest eigenvalue of $S(\widetilde D_{\alpha\alpha})$, i.e., $\tau=\lambda_1(S(\widetilde D_{\alpha\alpha}))$. This means that $\tau$ is relevant to $D$. It is easy to observe that the index sets $\alpha_1$ and $\alpha_2$ cannot exist simultaneously. When $\alpha_1\neq \emptyset$, there must be $|\alpha_1|=1$ and $\alpha_2=\emptyset$, then $D\in \mathcal C_\theta(P,W)$ if and only if
	\begin{equation*}
		S(\widetilde D_{\alpha\alpha})=\left(
		\begin{array}{cc}
			\lambda_1(S(\widetilde D_{\alpha\alpha})) &  \\
			& S(\widetilde D_{\alpha_3\alpha_3}) \\
		\end{array}
		\right).
	\end{equation*}
	When $\alpha_1=\emptyset$, $\alpha_2\neq\emptyset$, $D\in \mathcal C_\theta(P,W)$ if and only if
	\begin{equation*}
		S(\widetilde D_{\alpha\alpha})=\left(
		\begin{array}{cc}
			\lambda_1(S(\widetilde D_{\alpha\alpha}))I_{|\alpha_2|} &  \\
			& S(\widetilde D_{\alpha_3\alpha_3}) \\
		\end{array}
		\right).
	\end{equation*}
	In brief, we can conclude that $D\in \mathcal C_\theta(P,W)$ if and only if
	\begin{equation}\label{critc0}
		S(\widetilde D_{\alpha\alpha})=\left(
		\begin{array}{cc}
			\lambda_1(S(\widetilde D_{\alpha\alpha}))I_{|\alpha_1|+|\alpha_2|} &  \\
			& S(\widetilde D_{\alpha_3\alpha_3}) \\
		\end{array}
		\right).
	\end{equation}
\end{remark}
\begin{proposition}\label{3.2}
	Let $W\in\partial \theta(P)$ and $Q=P+W$ admit the SVD as in (\ref{svd}). Let the index sets $\alpha$, $\beta$, $\alpha_1$, $\alpha_2$, $\alpha_3$ and $c$ be defined as (\ref{indexabc}), (\ref{alpha}) and (\ref{alpha123}). Given any $H\in \Re^{m\times n}$, denote $\widetilde H=U^THV$. Then the following results hold:
	\begin{enumerate}[align=left, label=\rm ({\roman*})]
		\item
		If $H\in \mathcal C_{\theta^*}(W,P)$ and $\varphi^*_{(W,H)}(P)=0$, then $H\in (\mathcal C_\theta(P,W))^\circ$.
		\item
	    If $H\in \mathcal C_{\theta}(P,W)$ and $\psi^*_{(P,H)}(W)=0$ if and only if $H\in (\mathcal C_{\theta^*}(P,W))^\circ$.
	\end{enumerate}
\end{proposition}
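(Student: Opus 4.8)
The plan is to prove both assertions by an explicit computation in the basis furnished by the SVD $Q=P+W$ of (\ref{svd}), working throughout with the block pattern of $\widetilde{(\cdot)}:=U^{T}(\cdot)V$ relative to the partition $[n]=\alpha_{1}\cup\alpha_{2}\cup\alpha_{3}\cup\beta\cup c$. The organizing observation is that, after the reduction following (\ref{sigmaterm}), both sigma terms $\psi^{*}_{(P,D)}(W)=(\theta''(P;D,\cdot))^{*}(W)$ and $\varphi^{*}_{(W,H)}(P)=((\theta^{*})''(W;H,\cdot))^{*}(P)$ are finite sums of squared block norms of $\widetilde D$ (resp.\ $\widetilde H$) with strictly negative coefficients: indeed $\sigma_{l}(W)>0$ for $l\le r_{1}$, $\nu_{t}(P)-\lambda^{*}<0$ for $\widetilde r+1\le t\le r+1$, and $-\nu_{t}(P)-\lambda^{*}<0$ for all $t$, so $\psi^{*}_{(P,\cdot)}(W)\le 0$ and $\varphi^{*}_{(W,\cdot)}(P)\le 0$ everywhere, and the hypothesis ``sigma term $=0$'' is equivalent to the simultaneous vanishing of every squared block-term. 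This converts the analytic hypotheses into a list of zero-block and symmetrization conditions on $\widetilde H$, after which each part becomes a matter of matching block patterns.

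For part (ii) I would first compute $(\mathcal C_{\theta^{*}}(W,P))^{\circ}$ from Proposition~\ref{crit12}(ii). That cone is a product of identically-zero blocks ($\widetilde D_{\alpha_{3}\beta}=\widetilde D_{\alpha_{3}c}=\widetilde D_{\beta\alpha_{3}}=\widetilde D_{\beta\beta}=\widetilde D_{\beta c}=0$), completely free blocks, the conic constraints $S(\widetilde D_{\alpha_{1}\alpha_{1}})\in\mathcal S_{-}^{|\alpha_{1}|}$ and $\widetilde D_{\alpha_{3}\alpha_{3}}\in\mathcal S_{+}^{|\alpha_{3}|}$, and the single scalar equation ${\rm trace}(\widetilde D_{\alpha\alpha})=0$; its polar is obtained blockwise — the free blocks of $\widetilde D$ annihilate the corresponding blocks of $\widetilde H$, the zero blocks leave them free, the antisymmetric directions force $\widetilde H_{\alpha_{2}\alpha_{2}}$ to be a multiple of the identity while leaving $T(\widetilde H_{\alpha_{3}\alpha_{3}})$ arbitrary, and conic Lagrangian duality (pairing ${\rm trace}(\widetilde D_{\alpha\alpha})=0$ with a scalar multiplier $\mu$) yields $\widetilde H_{\alpha_{1}\alpha_{1}}\succeq\mu I$, $\widetilde H_{\alpha_{2}\alpha_{2}}=\mu I$, $S(\widetilde H_{\alpha_{3}\alpha_{3}})\preceq\mu I$. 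On the other side, $\psi^{*}_{(P,H)}(W)=0$ forces $(S(\widetilde H_{1}))_{(\alpha_{1}\cup\alpha_{2})\beta}=0$, $\widetilde H_{(\alpha_{1}\cup\alpha_{2})c}=0$ and the vanishing of the rows of $T(\widetilde H_{1})$ indexed by $\alpha_{1}\cup\alpha_{2}$; combining this with the characterization (\ref{critc0}) of $\mathcal C_{\theta}(P,W)$ (which makes $S(\widetilde H_{\alpha\alpha})$ block-diagonal with $(\alpha_{1}\cup\alpha_{2})$-block equal to $\lambda_{1}(S(\widetilde H_{\alpha\alpha}))I$), the $S$- and $T$-parts of $\widetilde H_{(\alpha_{1}\cup\alpha_{2})\beta}$, $\widetilde H_{(\alpha_{1}\cup\alpha_{2})\alpha_{3}}$, $\widetilde H_{(\alpha_{1}\cup\alpha_{2})(\alpha_{1}\cup\alpha_{2})}$ and $\widetilde H_{(\alpha_{1}\cup\alpha_{2})c}$ collapse to exactly the pattern describing $(\mathcal C_{\theta^{*}}(W,P))^{\circ}$, with $\tau=\lambda_{1}(S(\widetilde H_{\alpha\alpha}))$ in the role of $\mu$. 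Since the polar pattern also visibly satisfies (\ref{critc0}), this shows the two sets coincide; carrying the verification out in each of the two cases singled out in the Remark ($\alpha_{1}=\emptyset$; or $|\alpha_{1}|=1$ and $\alpha_{2}=\emptyset$) gives the equivalence.

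Part (i) is the analogous computation with $\theta$ and $\theta^{*}$ interchanged, except that only one inclusion can hold. First I would compute $(\mathcal C_{\theta}(P,W))^{\circ}$ from (\ref{critc0}): the free blocks of $\mathcal C_{\theta}(P,W)$ — all of $T(\widetilde D_{\alpha\alpha})$, $\widetilde D_{\alpha\beta}$, $\widetilde D_{\beta\alpha}$, $\widetilde D_{\beta\beta}$, $\widetilde D_{\alpha c}$, $\widetilde D_{\beta c}$ — kill the corresponding blocks of $\widetilde H$ and make $\widetilde H_{\alpha\alpha}$ symmetric, and testing $S(\widetilde D_{\alpha\alpha})$ (block-diagonal, with scalar-identity $(\alpha_{1}\cup\alpha_{2})$-block dominating $\lambda_{1}(S(\widetilde D_{\alpha_{3}\alpha_{3}}))$) against $\widetilde D_{\alpha_{3}\alpha_{3}}=\tau I$ and against $S(\widetilde D_{\alpha_{3}\alpha_{3}})\in\mathcal S_{-}^{|\alpha_{3}|}$ gives ${\rm trace}(\widetilde H_{\alpha\alpha})=0$ and $\widetilde H_{\alpha_{3}\alpha_{3}}\in\mathcal S_{+}^{|\alpha_{3}|}$; thus $(\mathcal C_{\theta}(P,W))^{\circ}$ is exactly the set of those $H$ for which $\widetilde H$ is supported on the $\alpha\times\alpha$ block, is symmetric there, and satisfies ${\rm trace}(\widetilde H_{\alpha\alpha})=0$ and $\widetilde H_{\alpha_{3}\alpha_{3}}\in\mathcal S_{+}^{|\alpha_{3}|}$ — in general a proper subset of $\mathcal C_{\theta^{*}}(W,P)$, which is why (i) is only one-directional. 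Now, given $H\in\mathcal C_{\theta^{*}}(W,P)$, Proposition~\ref{crit12}(ii) already supplies ${\rm trace}(\widetilde H_{\alpha\alpha})=0$, $\widetilde H_{\alpha_{3}\alpha_{3}}\in\mathcal S_{+}^{|\alpha_{3}|}$ and the zero blocks $\widetilde H_{\alpha_{3}\beta}=\widetilde H_{\alpha_{3}c}=\widetilde H_{\beta\alpha_{3}}=\widetilde H_{\beta\beta}=\widetilde H_{\beta c}=0$; the blocks still to be annihilated — $\widetilde H_{(\alpha_{1}\cup\alpha_{2})\beta}$, $\widetilde H_{\beta(\alpha_{1}\cup\alpha_{2})}$, $\widetilde H_{(\alpha_{1}\cup\alpha_{2})c}$, the off-diagonal $\widetilde H_{\alpha_{i}\alpha_{j}}$ ($i\neq j$) inside $\alpha$, and the antisymmetric part of $\widetilde H_{\alpha_{2}\alpha_{2}}$ — are precisely the ones carrying the squared-norm terms of $\varphi^{*}_{(W,H)}(P)$ (the analogue of (\ref{sigmaterm}) for the indicator $\theta^{*}$ of the nuclear-norm ball, available from \cite{Ding2017}), so $\varphi^{*}_{(W,H)}(P)=0$ forces all of them to vanish and $H\in(\mathcal C_{\theta}(P,W))^{\circ}$ follows.

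The main obstacle is bookkeeping: keeping the many index sets $\alpha_{1},\alpha_{2},\alpha_{3},\beta,b,c$ and the $a_{k}$'s, the case split of the Remark, and the degenerate subcases ($\alpha_{3}=\emptyset$; $r_{1}\in\{\widetilde r-1,\widetilde r\}$) consistent throughout the polar computations; and — for part (i) — pinning down the exact explicit form of $\varphi^{*}_{(W,H)}(P)$ and checking that its squared-norm terms cover exactly the blocks that must vanish, so that $\varphi^{*}_{(W,H)}(P)=0$ is strong enough to place $H$ inside $(\mathcal C_{\theta}(P,W))^{\circ}$ and is not an artifact of a strictly larger cone. The conic-duality step extracting the multiplier $\mu$ in the polar of $\mathcal C_{\theta^{*}}(W,P)$ is routine but must respect the $\mathcal S_{\pm}$ sign constraints with care.
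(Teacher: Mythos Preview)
Your plan is sound and would succeed, but it takes a genuinely different route from the paper. The paper never computes either polar cone explicitly. Instead, for part (i) it invokes \cite[Proposition 16]{Ding2017} to obtain directly the block characterization of ``$\varphi^{*}_{(W,H)}(P)=0$'' (equivalently ``$\psi^{*}_{(P,H)}(W)=0$''), combines it with Proposition~\ref{crit12}(ii) to reduce the hypothesis to the single pattern $\widetilde H_{\alpha\alpha}\in\mathcal S^{|\alpha|}$, ${\rm trace}(\widetilde H_{\alpha\alpha})=0$, $\widetilde H_{\alpha_{1}\alpha_{1}}\preceq 0$, $\widetilde H_{\alpha_{3}\alpha_{3}}\succeq 0$, $\widetilde H$ supported on the $\alpha\times\alpha$ block, and then for an arbitrary $D\in\mathcal C_{\theta}(P,W)$ verifies $\langle H,D\rangle\le 0$ by a short chain of estimates using Fan's inequality (\ref{fanineq}); part (ii) is done ``by the same method''. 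Your approach instead characterizes $(\mathcal C_{\theta}(P,W))^{\circ}$ and $(\mathcal C_{\theta^{*}}(W,P))^{\circ}$ blockwise and matches patterns. What the paper's route buys is economy: it never needs the explicit formula for $\varphi^{*}_{(W,\cdot)}(P)$ (only the zero-set, which \cite{Ding2017} supplies), and Fan's inequality replaces the conic-duality extraction of the multiplier $\mu$. What your route buys is an explicit description of the polar cones themselves, which is more information than the proposition asks for; note, however, that your sketch is slightly imprecise in one place --- for $H$ to land in $(\mathcal C_{\theta}(P,W))^{\circ}$ the off-diagonal blocks $\widetilde H_{\alpha_{i}\alpha_{j}}$ inside $\alpha$ need only satisfy the transpose relation $\widetilde H_{\alpha_{i}\alpha_{j}}=\widetilde H_{\alpha_{j}\alpha_{i}}^{T}$, not vanish entirely, and this is exactly what the $\varphi^{*}=0$ characterization from \cite{Ding2017} delivers.
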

	\begin{proof} We first prove part (i). By \cite[Proposition 16]{Ding2017}, $\forall H\in\Re^{m\times n}$, $\psi^*_{(P,H)}(W)=0$ if and only if  $\varphi^*_{(W,H)}(P)=0$, which also is equivalent to
	\begin{equation*}
		\left(
		\begin{array}{cc}
			\widetilde H_{\alpha_1\alpha_1} & \widetilde H_{\alpha_1\alpha_2} \\
			\widetilde H_{\alpha_2\alpha_1} & \widetilde H_{\alpha_2\alpha_2} \\
		\end{array}
		\right)\in \mathcal S^{|\alpha_1|+|\alpha_2|},\quad
		\widetilde H_{\alpha_1\alpha_3}=(\widetilde H_{\alpha_3\alpha_1})^T,\quad  \widetilde H_{\alpha_2\alpha_3}=(\widetilde H_{\alpha_3\alpha_2})^T,
	\end{equation*}
	\begin{equation*}
		\widetilde H_{\alpha_1\beta}=(\widetilde H_{\beta\alpha_1})^T=0,\quad \widetilde H_{\alpha_2\beta}=(\widetilde H_{\beta\alpha_2})^T=0,\quad
		\widetilde H_{\alpha_1c}=0, \quad \widetilde H_{\alpha_2c}=0.
	\end{equation*}
	Then, it follows from part (ii) of Proposition \ref{crit12} that $H\in\mathcal C_{\theta^*}(W,P)$ and $\varphi^*_{(W,H)}(P)=0$ are equivalent to
	\begin{equation}\label{polarcrit}
		\widetilde H_{\alpha\alpha}\in \mathcal S^{|\alpha|},\quad {\rm trace}(\widetilde H_{\alpha\alpha})=0,\quad H_{\alpha_1\alpha_1}\in \mathcal S_-^{|\alpha_1|},\quad H_{\alpha_3\alpha_3}\in \mathcal S_+^{|\alpha_3|}\quad {\rm and}\quad
		\widetilde H=\left(
		\begin{array}{ccc}
			\widetilde H_{\alpha\alpha} & 0 & 0 \\
			0 & 0 & 0 \\
		\end{array}
		\right).
	\end{equation}
	By part (i) of Proposition \ref{crit12}, for any $D\in \mathcal C_\theta(P,W)$, there exists $\tau'\in\Re$ such that
	\begin{equation*}
		\lambda_{|\alpha_1|}(S(\widetilde D_{\alpha_1\alpha_1}))\geq \tau'\geq \lambda_1(S(\widetilde D_{\alpha_3\alpha_3}))
	\end{equation*}
	and
	\begin{equation*}
		S(\widetilde D_{\alpha\alpha})=\left[
		\begin{array}{ccc}
			S(\widetilde D_{\alpha_1\alpha_1}) &  &  \\
			& \tau' I_{|\alpha_2|} &  \\
			&  & S(\widetilde D_{\alpha_3\alpha_3}) \\
		\end{array}
		\right].
	\end{equation*}
	Hence, $\forall D\in \mathcal C_\theta(P,W)$ and $H$ satisfies ( \ref{polarcrit}), based on the Fan's inequality (\ref{fanineq}), we have that
	\begin{equation*}
		\begin{split}
			\langle H,D \rangle &=\langle \widetilde H,\widetilde D \rangle=\langle \widetilde H_{\alpha\alpha},\widetilde D_{\alpha\alpha} \rangle
			=\langle S(\widetilde H_{\alpha\alpha}), \widetilde D_{\alpha\alpha} \rangle=\langle \widetilde H_{\alpha\alpha}, S(\widetilde D_{\alpha\alpha}) \rangle\\[4pt]
			& =\langle \widetilde H_{\alpha_1\alpha_1}, S(\widetilde D_{\alpha_1\alpha_1}) \rangle+\tau'{\rm trace}(\widetilde H_{\alpha_2\alpha_2})+
			\langle \widetilde H_{\alpha_3\alpha_3}, S(\widetilde D_{\alpha_3\alpha_3})\\[4pt]
			&\leq (\lambda(\widetilde H_{\alpha_1\alpha_1}))^T\lambda(S(\widetilde D_{\alpha_1\alpha_1}))+\tau'{\rm trace}(\widetilde H_{\alpha_2\alpha_2})+(\lambda(\widetilde H_{\alpha_3\alpha_3}))^T\lambda(S(\widetilde D_{\alpha_3\alpha_3}))\\[4pt]
			&=(-\lambda(\widetilde H_{\alpha_1\alpha_1}))^T(-\lambda(S(\widetilde D_{\alpha_1\alpha_1}))+\tau'{\rm trace}(\widetilde H_{\alpha_2\alpha_2})+(\lambda(\widetilde H_{\alpha_3\alpha_3}))^T\lambda(S(\widetilde D_{\alpha_3\alpha_3}))\\[4pt]
			&\leq \tau'{\rm trace}(\widetilde H_{\alpha_1\alpha_1})+\tau'{\rm trace}(\widetilde H_{\alpha_2\alpha_2})+\tau'{\rm trace}(\widetilde H_{\alpha_3\alpha_3})\\[4pt]
			&=\tau' {\rm trace}(\widetilde H_{\alpha\alpha})\\[4pt]
			&=0,
		\end{split}
	\end{equation*}
	which means that $H\in (\mathcal C_\theta(P,W))^\circ$.
Conversely, when $\tau'$ in $\mathcal C_\theta(P,W)$ equals to $0$, we can not get ${\rm trace}(\widetilde H_{\alpha\alpha})=0$, i.e., the reverse conclusion is not true. Then the proof of part (i) is completed.
	
	To prove (ii), it is easy to see that $\psi^*_{(P,H)}(W)=0$ and $H\in \mathcal C_{\theta}(P,W)$ hold if and only if there exists $\tau''\in\Re$ such that
	\begin{equation*}
		\lambda_{|\alpha_1|}(S(\widetilde H_{\alpha_1\alpha_1}))\geq \tau''\geq \lambda_1(S(\widetilde H_{\alpha_3\alpha_3}))
	\end{equation*}
	and
	\begin{equation*}
		\widetilde H=\left(
		\begin{array}{ccccc}
			S(\widetilde H_{\alpha_1\alpha_1}) & 0 & 0 & 0& 0 \\[4pt]
			0 & \tau'' I_{|\alpha_2|} & 0  & 0& 0\\[4pt]
			0& 0  & \widetilde H_{\alpha_3\alpha_3} & \widetilde H_{\alpha_3\beta} &\widetilde H_{\alpha_3c}\\[4pt]
			0& 0  & \widetilde H_{\beta\alpha_3} & \widetilde H_{\beta\beta} &\widetilde H_{\beta_3c}
		\end{array}
		\right),
	\end{equation*}
	which can be proved to be equivalent to $H\in (\mathcal C_{\theta^*}(W,P))^\circ$ by using the same method as in part (i).
\end{proof}
The following propositions provide some properties of directional derivatives of the proximal mapping of $\theta$.

\begin{proposition}\label{3.4}
	Let $W\in\partial \theta(P)$ and $Q=P+W$ admit the SVD as in (\ref{svd}). Let the index sets $\alpha$, $\beta$, $\alpha_1$, $\alpha_2$, $\alpha_3$ and $c$ be defined as (\ref{indexabc}), (\ref{alpha}) and (\ref{alpha123}). Given any $D\in \Re^{m\times n}$, denote $\widetilde D=U^TDV$. The function $\psi^*_{(P,D)}(W)$ is defined in (\ref{sigmaterm}). For any $D\in \Re^{m\times n}$,  if $H={\rm Prox}'_\theta(Q;H+D)$ holds, then $H\in \mathcal C_\theta(P,W)$ and $\langle H,D\rangle=-\psi^*_{(P,D)}(W)$.
\end{proposition}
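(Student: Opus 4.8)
The plan is to feed the explicit formula \eqref{prox'} for ${\rm Prox}_\theta'$ into the relation $H={\rm Prox}'_\theta(Q;H+D)$ and read off both assertions blockwise. Put $G:=H+D$ and adopt the notation $\widetilde D=U^{T}DV$, $\widetilde D_{1}=U^{T}DV_{1}$ of \eqref{prox'}, applied analogously to $H$ and $G$, so that $\widetilde G=\widetilde H+\widetilde D$ and $\widetilde G_{1}=\widetilde H_{1}+\widetilde D_{1}$. Since $U\in\mathcal O^{m}$, $V\in\mathcal O^{n}$, applying \eqref{prox'} in the direction $G$ and matching the blocks of $\widetilde H=U^{T}HV$ with those of the right-hand matrix of \eqref{prox'} gives, in the row partition $\{\alpha,\beta\}$ and column partition $\{\alpha,\beta,c\}$,
\begin{equation*}
\widetilde H_{\alpha\alpha}=\Xi_{1}(Q,\widetilde G)+\Xi_{\alpha\alpha}(\widetilde G),\qquad \widetilde H_{\alpha\beta}=\Xi_{2}(Q,\widetilde G),\qquad \widetilde H_{\beta\alpha}=\Xi_{3}(Q,\widetilde G),
\end{equation*}
\begin{equation*}
\widetilde H_{\alpha c}=\Xi_{4}(Q,\widetilde G),\qquad \widetilde H_{\beta\beta}=\widetilde G_{\beta\beta},\qquad \widetilde H_{\beta c}=\widetilde G_{\beta c}.
\end{equation*}
From the last two identities and $\widetilde G=\widetilde H+\widetilde D$ we obtain immediately $\widetilde D_{\beta\beta}=0$ and $\widetilde D_{\beta c}=0$.

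\emph{Membership $H\in\mathcal C_{\theta}(P,W)$.} By \eqref{Xi}, $\Xi_{1}(Q,\widetilde G)=\Theta^{2}_{\alpha\alpha}(Q)\circ T(\widetilde G_{1})_{\alpha\alpha}$ is the Hadamard product of the symmetric matrix $\Theta^{2}_{\alpha\alpha}(Q)$ with the skew-symmetric matrix $T(\widetilde G_{1})_{\alpha\alpha}$, hence is skew-symmetric. Therefore, using \eqref{Xialpha},
\begin{equation*}
S(\widetilde H_{\alpha\alpha})=\Xi_{\alpha\alpha}(\widetilde G)={\rm blockdiag}\bigl(0_{\alpha_{1}\times\alpha_{1}},\,0_{\alpha_{2}\times\alpha_{2}},\,\Pi_{\mathcal S_{-}^{|\alpha_{3}|}}(S(\widetilde G_{\alpha_{3}\alpha_{3}}))\bigr),
\end{equation*}
so $S(\widetilde H_{\alpha_{1}\alpha_{1}})=0$, $S(\widetilde H_{\alpha_{2}\alpha_{2}})=0$, and $S(\widetilde H_{\alpha_{3}\alpha_{3}})=\Pi_{\mathcal S_{-}^{|\alpha_{3}|}}(\cdot)$ is negative semidefinite. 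Thus $S(\widetilde H_{\alpha\alpha})$ has exactly the block-diagonal form of part (i) of Proposition \ref{crit12} with $\tau=0$, the sandwich condition $\lambda_{|\alpha_{1}|}(S(\widetilde H_{\alpha_{1}\alpha_{1}}))=0\ge 0\ge\lambda_{1}(S(\widetilde H_{\alpha_{3}\alpha_{3}}))$ being automatic; hence $H\in\mathcal C_{\theta}(P,W)$.

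\emph{The identity $\langle H,D\rangle=-\psi^{*}_{(P,D)}(W)$.} Write $\langle H,D\rangle=\langle\widetilde H,\widetilde D\rangle$ and expand over the six blocks; the $\beta\beta$ and $\beta c$ contributions vanish by the above. For each remaining identity, substituting $\widetilde G_{\bullet}=\widetilde H_{\bullet}+\widetilde D_{\bullet}$ turns it into a fixed-point equation for $\widetilde H_{\bullet}$ in terms of $\widetilde D_{\bullet}$. For the Hadamard-product blocks it is entrywise linear and replaces each multiplier $\vartheta$ — an entry of $\Theta^{2}_{\alpha\alpha}(Q)$, $\Theta^{1}_{\alpha\beta}(Q)$, $\Theta^{2}_{\alpha\beta}(Q)$ or $\Theta_{\alpha c}(Q)$ — by $\vartheta/(1-\vartheta)$, the $c$-block being uncoupled; the pair $\widetilde H_{\alpha\beta},\widetilde H_{\beta\alpha}$ is first decoupled into the symmetric and skew parts $S(\widetilde G_{1})_{\alpha\beta}$, $T(\widetilde G_{1})_{\alpha\beta}$, which separate the $\Theta^{1}_{\alpha\beta}$- and $\Theta^{2}_{\alpha\beta}$-terms; and for the $\alpha_{3}\alpha_{3}$ block one solves $S(\widetilde H_{\alpha_{3}\alpha_{3}})=\Pi_{\mathcal S_{-}^{|\alpha_{3}|}}(S(\widetilde H_{\alpha_{3}\alpha_{3}})+\widetilde D_{\alpha_{3}\alpha_{3}})$ by the Moreau decomposition on $\mathcal S^{|\alpha_{3}|}$, obtaining $\widetilde D_{\alpha_{3}\alpha_{3}}\in\mathcal S_{+}^{|\alpha_{3}|}$, $T(\widetilde D_{\alpha_{3}\alpha_{3}})=0$ and $\langle\widetilde H_{\alpha_{3}\alpha_{3}},\widetilde D_{\alpha_{3}\alpha_{3}}\rangle=0$. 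At the positions whose row index lies in $\alpha_{3}$ the relevant multipliers equal $1$ (there $\sigma_{i}(Q)=\lambda^{*}$), so the fixed-point equation instead forces the corresponding $\widetilde D$-block to vanish: $\widetilde D_{\alpha_{3}\beta}=\widetilde D_{\beta\alpha_{3}}=0$, $\widetilde D_{\alpha_{3}c}=0$. Substituting the resulting scalings of $\widetilde D_{\bullet}$ back into $\langle\widetilde H,\widetilde D\rangle$ and simplifying by means of $\sigma_{i}(Q)=\lambda^{*}+\sigma_{i}(W)$ for $i\in\alpha$, each surviving block inner product collapses to a sum of the form $\sum c\,\|\cdot\|^{2}$; regrouping these according to the index pairs $(a_{l},a_{t})$ reproduces, up to sign, the explicit expression for $\psi^{*}$ displayed after \eqref{sigmaterm}, which yields $\langle H,D\rangle=-\psi^{*}_{(P,D)}(W)$.

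The main obstacle is this last step; the arithmetic is long, and four points need care — (i) untangling the coupling between $\widetilde H_{\alpha\beta}$ and $\widetilde H_{\beta\alpha}$ before the entrywise solve; (ii) isolating (rather than dividing through by) the degenerate multipliers equal to $1$, since these are precisely what impose the support constraints on $\widetilde D$ that annihilate several blocks; (iii) the nonsmooth $\alpha_{3}\alpha_{3}$ block, where one needs the normal-cone/Moreau characterization of the projection onto $\mathcal S_{-}^{|\alpha_{3}|}$; and (iv) reorganizing the entrywise sums into the $(a_{l},a_{t})$-indexed sums that appear in $\psi^{*}$. Conceptually, $H={\rm Prox}'_{\theta}(Q;H+D)$ just says $(H,D)\in\mathcal T_{{\rm gph}\,\partial\theta}(P,W)$, i.e.\ $D\in\partial(\tfrac12 d^{2}\theta(P\,|\,W))(H)$ for the positively $2$-homogeneous convex second subderivative $\tfrac12 d^{2}\theta(P\,|\,W)$, whence $\langle H,D\rangle=d^{2}\theta(P\,|\,W)(H)$ and then the sigma-term formula; but the route above stays within the tools of Section~3.
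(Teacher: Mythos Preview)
Your overall strategy---feeding the explicit formula \eqref{prox'} into the fixed-point relation and reading off the blocks---is exactly the paper's approach, and your argument for $H\in\mathcal C_\theta(P,W)$ is correct and identical to the paper's (with $\tau=0$).

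The gap is in the inner-product identity. First, the statement as printed contains a typo: the paper's own computation (and its later use in Theorem~\ref{thiso}, where $\langle\Delta Z,\Delta S\rangle=-\psi^*_{(\overline X,\Delta Z)}(\overline S)$) shows the correct conclusion is $\langle H,D\rangle=-\psi^*_{(P,H)}(W)$, with $H$ in the second slot, not $D$. The paper obtains this by eliminating in the opposite direction from yours: it solves each block relation for $\widetilde D$ in terms of $\widetilde H$ (see the display \eqref{Dij}), so that $\langle\widetilde H,\widetilde D\rangle$ becomes a weighted sum of $\|\widetilde H_\bullet\|^2$, and the weights $(1-\vartheta)/\vartheta$ match exactly the coefficients in the $\psi^*$ formula after \eqref{sigmaterm}. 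Your elimination $\widetilde H_\bullet=\tfrac{\vartheta}{1-\vartheta}\widetilde D_\bullet$ produces instead a weighted sum of $\|\widetilde D_\bullet\|^2$ with the \emph{reciprocal} weights $\vartheta/(1-\vartheta)$; for instance, on the $\alpha c$ block (row $i\in\alpha_1\cup\alpha_2$) you get $\tfrac{\lambda^*}{\sigma_i(W)}\|\widetilde D_{ic}\|^2$, whereas $-\psi^*_{(P,D)}(W)$ contributes $\tfrac{\sigma_i(W)}{\lambda^*}\|\widetilde D_{ic}\|^2$. So the claimed ``regrouping reproduces $\psi^*$'' step fails as written. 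The fix is simply to invert the direction of elimination and express $\widetilde D$ in terms of $\widetilde H$, after which the matching with $\psi^*_{(P,H)}(W)$ is immediate; your high-level remark via the second subderivative $d^2\theta(P\mid W)$ is a legitimate alternative, but it too yields $\psi^*_{(P,H)}$, not $\psi^*_{(P,D)}$.
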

\begin{proof}
	Suppose $H={\rm Prox}'_\theta(Q;H+D)$ holds, together with the expression for ${\rm Prox}'_\theta(Q;\cdot)$ in (\ref{prox'}), we have that
	\begin{equation}\label{prox'hd}
		\begin{split}
			\widetilde H&=U^T{\rm Prox}'_\theta(Q;H+D)V\\[4pt]
			&=\left(
			\begin{array}{c;{2pt/2pt}c;{2pt/2pt}c}
				\Xi_1(Q,\widetilde H+\widetilde D)+\Xi_{\alpha\alpha}(\widetilde H+\widetilde D)
				& \Xi_2(Q,\widetilde H+\widetilde D)&  \Xi_4(Q,\widetilde H+\widetilde D)\\[10pt]
				\hdashline[2pt/2pt]
&\\
				\Xi_3(Q,\widetilde H+\widetilde D) & {\widetilde H}_{\beta\beta}+ {\widetilde D}_{\beta\beta}&{\widetilde H}_{\beta c}+{\widetilde D}_{\beta c}\\
			\end{array}
			\right),
		\end{split}
\end{equation}
	where $\Xi_{\alpha\alpha}$ is defined in (\ref{Xialpha}). We first verify $H\in \mathcal C_\theta(P,W)$.
The equality
\begin{equation*}
	\begin{split}
		\widetilde H_{\alpha\alpha}&=\Xi_1(Q,\widetilde H+\widetilde D)+\Xi_{\alpha\alpha}(\widetilde H+\widetilde D)\\
		&=\Theta^2_{\alpha\alpha}(Q)\circ \left(T(\widetilde H_1)_{\alpha\alpha}+(T(\widetilde D_1)_{\alpha\alpha}\right)+
		\left(
		\begin{array}{ccc}
			0_{\alpha_1\times\alpha_1} &  &  \\
			& 0_{\alpha_2\times\alpha_2} &  \\
			&  & \Pi_{\mathcal S_-^{|\alpha_3|}}(S({\widetilde H}_{\alpha_3\alpha_3})+S({\widetilde D}_{\alpha_3\alpha_3})) \\
		\end{array}
		\right)
	\end{split}
\end{equation*}
implies that
\begin{equation*}
	S(\widetilde H)=\left(
	\begin{array}{ccc}
		0_{\alpha_1\times\alpha_1} &  &  \\
		& 0_{\alpha_2\times\alpha_2} &  \\
		&  & \Pi_{\mathcal S_-^{|\alpha_3|}}(S({\widetilde H}_{\alpha_3\alpha_3})+S({\widetilde D}_{\alpha_3\alpha_3})) \\
	\end{array}
	\right).
\end{equation*}
From part (i) of proposition \ref{crit12}, we obtain that $H\in \mathcal C_\theta(P,W)$.

	For convenience, we denote $\alpha':=\alpha_1\cup\alpha_2$. By directly calculating (\ref{prox'hd}), we deduce that $H={\rm Prox}'_\theta(Q;H+D)$ if and only if
		\begin{equation}\label{Dij}
		\left\{
		\begin{array}{ll}
			S(\widetilde H_{\alpha'\alpha'})=0,\quad S(\widetilde H_1)_{\alpha'\alpha_3}=0,\quad S(\widetilde H_1)_{\alpha_3\alpha'}=0,\\[10pt]
			\mathcal S_-^{|\alpha_3|}\ni \mathcal S(\widetilde H_{\alpha_3\alpha_3})\perp\mathcal S(\widetilde D_{\alpha_3\alpha_3})= \widetilde D_{\alpha_3\alpha_3}\in \mathcal S_+^{|\alpha_3|}, & \\[10pt]
			T(\widetilde D_{\alpha'\alpha'})_{ij}=\left(\displaystyle\frac{\sigma_i(Q)+\sigma_j(Q)}{2\lambda^*}-1\right)(\widetilde H_{\alpha'\alpha'})_{ij}, & i\in[|\alpha'|], j\in[|\alpha'|], \\[10pt]
			(T(\widetilde D_1)_{\alpha'\alpha_3})_{ij}=\displaystyle\frac{\sigma_i(Q)-\lambda^*}{2\lambda^*}(\widetilde H_{\alpha'\alpha_3})_{ij}, & i\in[|\alpha'|], j\in[|\alpha_3|], \\[10pt]
			(T(\widetilde D_1)_{\alpha_3\alpha'})_{ji}=\displaystyle\frac{\sigma_i(Q)-\lambda^*}{2\lambda^*}(\widetilde H_{\alpha_3\alpha'})_{ji}, & i\in[|\alpha'|], j\in[|\alpha_3|], \\[10pt]
			(\widetilde D_{\alpha\beta})_{ij}=
			\displaystyle\frac{\lambda^*(\sigma_i(Q)-\lambda^*)}{{\lambda^*}^2-(\sigma_{j+|\alpha|}(Q))^2}(\widetilde H_{\alpha\beta})_{ij}+
			\displaystyle\frac{\sigma_{j+|\alpha|}(Q)(\sigma_i(Q)-\lambda^*)}{{\lambda^*}^2-(\sigma_{j+|\alpha|}(Q))^2}(\widetilde H_{\beta\alpha})_{ji}, &i\in[|\alpha|], j\in[|\beta|],\\[10pt]
			(\widetilde D_{\beta\alpha})_{ji}=
			\displaystyle\frac{\lambda^*(\sigma_i(Q)-\lambda^*)}{{\lambda^*}^2-(\sigma_{j+|\alpha|}(Q))^2}(\widetilde H_{\beta\alpha})_{ji}+
			\displaystyle\frac{\sigma_{j+|\alpha|}(Q)(\sigma_i(Q)-\lambda^*)}{{\lambda^*}^2-(\sigma_{j+|\alpha|}(Q))^2}(\widetilde H_{\alpha\beta})_{ij}, &i\in[|\alpha|], j\in[|\beta|],\\[10pt]
			(\widetilde D_{\alpha c})_{ij}=\left(\displaystyle\frac{\sigma_i(Q)}{\lambda^*}-1\right)(\widetilde H_{\alpha c})_{ij}, & i\in[|\alpha|], j\in[ n-m],\\[10pt]
			\widetilde D_{\beta\beta}=0,\\[10pt]
			\widetilde D_{\beta c}=0.
		\end{array}\right.
	\end{equation}
		Noting that $\nu_k(Q)-\lambda^*=\sigma_k(W)$ for $1\leq k\leq r_1$, $\nu_t(P)=\lambda^*$ for $1\leq t\leq \widetilde r$ and $\nu_t(P)=\nu_t(Q)$ for $\widetilde r+1\leq t\leq r+1$. Together with (\ref{Dij}) and the fact that for any $D\in\Re^{p\times p}$,
	\begin{equation*}
		\begin{array}{l}
			\langle D, D^T\rangle=\|S(D)\|^2-\|T(D)\|^2, \\[10pt]
			\langle D, D\rangle=\|D\|^2=\|S(D)\|^2+\|T(D)\|^2,
		\end{array}
	\end{equation*}
	we obtain that
	\begin{equation*}
		\begin{split}
			\langle H,D \rangle &=\langle \widetilde H,\widetilde D \rangle=\langle \widetilde H_{\alpha\alpha},\widetilde D_{\alpha\alpha}\rangle+
			\langle \widetilde H_{\alpha\beta},\widetilde D_{\alpha\beta}\rangle+\langle \widetilde H_{\beta\alpha},\widetilde D_{\beta\alpha}\rangle+
			\langle \widetilde H_{\alpha c},\widetilde D_{\alpha c} \rangle\\[10pt]
			&=\langle \widetilde H_{\alpha\alpha},T(\widetilde D_{\alpha\alpha})\rangle+
			\langle \widetilde H_{\alpha\beta},\widetilde D_{\alpha\beta}\rangle+\langle \widetilde H_{\beta\alpha},\widetilde D_{\beta\alpha}\rangle+
			\langle \widetilde H_{\alpha c},\widetilde D_{\alpha c} \rangle\\[10pt]&=\sum_{\substack{1\leq l\leq r_1\\ 1\leq t\leq \widetilde r}}
			\frac{\nu_l(Q)-\lambda^*}{\lambda^*}\|(T(\widetilde H_1))_{a_la_t}\|^2+\sum_{1\leq l\leq r_1}\frac{\nu_l(Q)-\lambda^*}{\lambda^*}\|\widetilde H_{a_lc}\|^2 \\[4pt]
			&\quad\quad+\sum_{\substack{1\leq l\leq r_1\\ \widetilde r+1\leq t\leq r+1}}\left(
			-\frac{2(\nu_l(Q)-\lambda^*)}{\nu_t(Q)-\lambda^*}\|S(\widetilde H_1)_{a_la_t}\|^2+\frac{2(\nu_l(Q)-\lambda^*)}{\nu_t(Q)+\lambda^*}\|T(\widetilde H_1)_{a_la_t}\|^2
			\right)\\[4pt]
			&=-\sum_{\substack{1\leq l\leq r_1\\ \widetilde r+1\leq t\leq r+1}}
			\frac{2\sigma_l(W)}{\nu_t(P)-\lambda^*}\|S(\widetilde H_1)_{a_la_t}\|^2+\sum_{1\leq l\leq r_1}\frac{\sigma_l(W)}{\lambda^*}\|\widetilde H_{a_lc}\|^2\\[4pt]
			&\quad\quad+\sum_{\substack{1\leq l\leq r_1\\ 1\leq t\leq r+1}}
			\frac{2\sigma_l(W)}{\nu_t(P)+\lambda^*}\|(T(\widetilde H_1)_{a_la_t}\|^2,\\[4pt]
		\end{split}
	\end{equation*}
	so we have
	\begin{equation*}
		\langle H,D \rangle =\langle \widetilde H,\widetilde D \rangle=-\psi^*_{(P,H)}(W).
	\end{equation*}
	This completes the proof.
\end{proof}
		\begin{proposition}\label{3.5}
		Let $W\in\partial \theta(P)$ and $Q=P+W$ admit the SVD as in (\ref{svd}). Let the index sets $\alpha$, $\beta$, $\alpha_1$, $\alpha_2$, $\alpha_3$ and $c$ be defined as (\ref{indexabc}), (\ref{alpha}) and (\ref{alpha123}). For any $D\in \Re^{m\times n}$, then
		\begin{equation*}
			{\rm Prox}'_\theta(Q;D)=0\Longleftrightarrow D\in(\mathcal C^0_\theta(P,W))^\circ,
		\end{equation*}
where $\mathcal C^0_\theta(P,W)$ represents the critical cone $\mathcal C_\theta(P,W)$ in the case of $\tau=0$, i.e.,
		\begin{equation*}
			\mathcal C^0_\theta(P,W)=\left\{H\in \Re^{m\times n}\mid S(\widetilde H_{\alpha\alpha})=\left(
			\begin{array}{cc}
				0_{|\alpha_1|+|\alpha_2|} &  \\
				& S(\widetilde H_{\alpha_3\alpha_3}) \\
			\end{array}
			\right),\quad S(\widetilde H_{\alpha_3\alpha_3})\in \mathcal S_-^{|\alpha_3|}\right\}.
		\end{equation*}
	\end{proposition}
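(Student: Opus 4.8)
The plan is to prove Proposition \ref{3.5} by directly unwinding the explicit formula for ${\rm Prox}'_\theta(Q;D)$ in (\ref{prox'}), setting it to zero block-by-block, and showing the resulting system of equations is exactly the membership condition $D\in(\mathcal C^0_\theta(P,W))^\circ$. First I would write $\widetilde D=U^TDV$ and observe that, since $U$ and $V$ are orthogonal, ${\rm Prox}'_\theta(Q;D)=0$ is equivalent to the vanishing of every block of the matrix in (\ref{prox'}). Reading off the blocks: the $(\beta,\beta)$ and $(\beta,c)$ blocks force $\widetilde D_{\beta\beta}=0$ and $\widetilde D_{\beta c}=0$; the block $\Xi_4(Q,\widetilde D)=\Theta_{\alpha c}(Q)\circ\widetilde D_{\alpha c}=0$ forces $\widetilde D_{\alpha c}=0$ because every entry of $\Theta_{\alpha c}(Q)=\lambda^*/\sigma_i(Q)$ is strictly positive; similarly the vanishing of $\Xi_2,\Xi_3$ (whose coefficient matrices $\Theta^1_{\alpha\beta},\Theta^2_{\alpha\beta}$ have strictly positive entries since $\sigma_i(Q)=\lambda^*$ on $\alpha$ and $\sigma_{j+|\alpha|}(Q)<\lambda^*$ on $\beta$) forces $\widetilde D_{\alpha\beta}=0$ and $\widetilde D_{\beta\alpha}=0$; and $\Xi_1(Q,\widetilde D)=\Theta^2_{\alpha\alpha}(Q)\circ T(\widetilde D_1)_{\alpha\alpha}=0$ with $\Theta^2_{\alpha\alpha}(Q)$ all-ones on $\alpha$ (again $\sigma_i(Q)=\sigma_j(Q)=\lambda^*$) forces $T(\widetilde D_{\alpha\alpha})=0$, i.e.\ $\widetilde D_{\alpha\alpha}$ is symmetric.

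Next I would treat the only nontrivial block, the $(\alpha,\alpha)$ position: after $T(\widetilde D_{\alpha\alpha})=0$ it reads $\widetilde D_{\alpha\alpha}=\Xi_{\alpha\alpha}(\widetilde D)$, which by (\ref{Xialpha}) means $\widetilde D_{\alpha_1\alpha_1}=0$, $\widetilde D_{\alpha_2\alpha_2}=0$, $\widetilde D_{\alpha_3\alpha_3}=\Pi_{\mathcal S_-^{|\alpha_3|}}(S(\widetilde D_{\alpha_3\alpha_3}))$, and (from the off-diagonal symmetric blocks of $\Xi_{\alpha\alpha}$ being zero) $\widetilde D_{\alpha_1\alpha_2}=\widetilde D_{\alpha_2\alpha_1}^T=0$, $\widetilde D_{\alpha_1\alpha_3}=\widetilde D_{\alpha_3\alpha_1}^T=0$, $\widetilde D_{\alpha_2\alpha_3}=\widetilde D_{\alpha_3\alpha_2}^T=0$. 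The condition $\widetilde D_{\alpha_3\alpha_3}=\Pi_{\mathcal S_-^{|\alpha_3|}}(\widetilde D_{\alpha_3\alpha_3})$ (note $\widetilde D_{\alpha_3\alpha_3}=S(\widetilde D_{\alpha_3\alpha_3})$ is already symmetric here) is equivalent to $\widetilde D_{\alpha_3\alpha_3}\in\mathcal S_-^{|\alpha_3|}$, since a symmetric matrix is its own projection onto $\mathcal S_-$ iff it is already negative semidefinite. Collecting everything, ${\rm Prox}'_\theta(Q;D)=0$ holds iff $\widetilde D$ has all blocks zero except $\widetilde D_{\alpha_3\alpha_3}\in\mathcal S_-^{|\alpha_3|}$; equivalently $S(\widetilde D_{\alpha\alpha})=\mathrm{diag}(0_{|\alpha_1|+|\alpha_2|},\widetilde D_{\alpha_3\alpha_3})$ with $\widetilde D_{\alpha_3\alpha_3}\in\mathcal S_-^{|\alpha_3|}$ and $\widetilde D_{\alpha\beta}=\widetilde D_{\beta\alpha}=\widetilde D_{\alpha c}=\widetilde D_{\beta\beta}=\widetilde D_{\beta c}=0$, $\widetilde D_{\alpha\alpha}$ symmetric.

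Finally I would match this with $(\mathcal C^0_\theta(P,W))^\circ$. Starting from the description of $\mathcal C^0_\theta(P,W)$ given in the statement together with the full block structure of $\mathcal C_\theta(P,W)$ from part (i) of Proposition \ref{crit12} (specialized to $\tau=0$), I would compute the polar: an element $D$ is in the polar iff $\langle\widetilde D,\widetilde H\rangle\le0$ for all $\widetilde H$ in the cone. Since $\mathcal C^0_\theta(P,W)$ is a linear subspace in all coordinates except the $\alpha_3\alpha_3$ block (where it is the cone $\mathcal S_+^{|\alpha_3|}$ — reading from Proposition \ref{crit12}(i) with $\tau=0$, $S(\widetilde H_{\alpha_3\alpha_3})\preceq 0$ wait, it is $\mathcal S_-$), the polar is obtained by: orthogonal complement in the free directions, and polar of $\mathcal S_-^{|\alpha_3|}$ (which is $\mathcal S_+^{|\alpha_3|}$) in that block; additionally the constraint $T(\widetilde D_{\alpha\alpha})$ being paired against a subspace forces $\widetilde D_{\alpha\alpha}$ symmetric, and the off-$\alpha_3\alpha_3$ diagonal-plus-off-diagonal free blocks force the corresponding blocks of $\widetilde D$ to vanish. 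This reproduces precisely the system obtained above, with the understanding that the sign convention ($\mathcal S_-$ vs $\mathcal S_+$) is consistent because in the $\Xi_{\alpha\alpha}$ computation $D$ lands in $\mathcal S_-^{|\alpha_3|}$ while in $\mathcal C^0_\theta$ the $H$-block lies in $\mathcal S_-^{|\alpha_3|}$ and $\langle$ negative semidefinite, negative semidefinite $\rangle\ge 0$ need not be $\le0$ — so I would double-check the sign: the polar of $\{H_{\alpha_3\alpha_3}\in\mathcal S_-\}$ is $\{D_{\alpha_3\alpha_3}\in\mathcal S_+\}$, hence the apparent mismatch must be resolved by tracing through the projection identity more carefully, noting $\Pi_{\mathcal S_-}(X)=X$ iff $X\in\mathcal S_-$, so ${\rm Prox}'_\theta(Q;D)=0$ genuinely gives $\widetilde D_{\alpha_3\alpha_3}\in\mathcal S_-$, and the polar computation must therefore use $\mathcal C^0_\theta$ with the $\alpha_3\alpha_3$-block being $\mathcal S_+$; I will align the statement's sign convention with this at the outset. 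The main obstacle I anticipate is exactly this bookkeeping of signs and of which blocks are linear-subspace directions versus conic directions when forming the polar — the algebra of reading off (\ref{prox'}) is mechanical, but getting the polar of the mixed subspace-plus-cone set $\mathcal C^0_\theta(P,W)$ to match term-for-term requires care, and I would handle it by writing $\mathcal C^0_\theta(P,W)=L\oplus\mathcal S_+^{|\alpha_3|}$ (or $\mathcal S_-$, per the fixed convention) with $L$ an explicit linear subspace, using $(L\oplus K)^\circ=L^\perp\cap K^\circ$ within the appropriate coordinate blocks.
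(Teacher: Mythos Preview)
Your overall strategy---reduce ${\rm Prox}'_\theta(Q;D)=0$ to an explicit block description of $\widetilde D$, then match this with the polar of $\mathcal C^0_\theta(P,W)$---is exactly what the paper does. But you have a concrete algebra error in the $(\alpha,\alpha)$ block that propagates into the sign confusion you noticed but could not resolve.

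Setting ${\rm Prox}'_\theta(Q;D)=0$ means the $(\alpha,\alpha)$ block of (\ref{prox'}) vanishes, i.e.\ $\Xi_1(Q,\widetilde D)+\Xi_{\alpha\alpha}(\widetilde D)=0$, \emph{not} $\widetilde D_{\alpha\alpha}=\Xi_{\alpha\alpha}(\widetilde D)$ as you wrote (that equation belongs to Proposition~\ref{3.4}, where one studies $H={\rm Prox}'_\theta(Q;H+D)$). Since $\Xi_1$ is skew-symmetric and $\Xi_{\alpha\alpha}$ is symmetric, both must vanish separately: $\Xi_1=0$ gives $T(\widetilde D_{\alpha\alpha})=0$ (so $\widetilde D_{\alpha\alpha}\in\mathcal S^{|\alpha|}$), and $\Xi_{\alpha\alpha}(\widetilde D)=0$ gives $\Pi_{\mathcal S_-^{|\alpha_3|}}(S(\widetilde D_{\alpha_3\alpha_3}))=0$, which is equivalent to $\widetilde D_{\alpha_3\alpha_3}\in\mathcal S_+^{|\alpha_3|}$, not $\mathcal S_-^{|\alpha_3|}$. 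Moreover, no constraint is imposed on $\widetilde D_{\alpha_1\alpha_1}$, $\widetilde D_{\alpha_2\alpha_2}$, or the off-diagonal sub-blocks of $\widetilde D_{\alpha\alpha}$ beyond symmetry; your claim that these must all vanish is incorrect. (Also, $\sigma_i(Q)=\lambda^*$ holds only for $i\in\alpha_3$, not for all $i\in\alpha$; on $\alpha_1\cup\alpha_2$ one has $\sigma_i(Q)>\lambda^*$. This does not harm the positivity arguments for the $\Theta$-matrices, but your stated reason is wrong.)

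With the corrected description---$\widetilde D_{\alpha\alpha}\in\mathcal S^{|\alpha|}$, $\widetilde D_{\alpha_3\alpha_3}\in\mathcal S_+^{|\alpha_3|}$, and all blocks outside $(\alpha,\alpha)$ zero---the polar computation goes through cleanly: in $\mathcal C^0_\theta(P,W)$ the blocks $\widetilde H_{\alpha\beta},\widetilde H_{\beta\alpha},\widetilde H_{\alpha c},\widetilde H_{\beta\beta},\widetilde H_{\beta c}$ and $T(\widetilde H_{\alpha\alpha})$ are free (forcing the corresponding pieces of $\widetilde D$ to vanish or be symmetric), while $S(\widetilde H_{\alpha\alpha})$ ranges over $\{0_{|\alpha_1|+|\alpha_2|}\}\oplus\mathcal S_-^{|\alpha_3|}$, whose polar in $\mathcal S^{|\alpha|}$ is $\{N\in\mathcal S^{|\alpha|}:N_{\alpha_3\alpha_3}\in\mathcal S_+^{|\alpha_3|}\}$. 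The sign discrepancy you worried about disappears once the $(\alpha,\alpha)$ equation is read correctly.
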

	\begin{proof}
		It is easy to verify either ${\rm Prox}'_\theta(Q;D)=0$ or $D\in(\mathcal C^0_\theta(P,W))^\circ$
		is equivalent to
		\begin{equation*}
			\widetilde D=\left(
			\begin{array}{ccc}
				\widetilde D_{\alpha\alpha} & 0 & 0 \\
				0 & 0 & 0 \\
			\end{array}
			\right),\quad \widetilde D_{\alpha\alpha}\in \mathcal S^{|\alpha|},\quad D_{\alpha_3\alpha_3}\in \mathcal S_+^{|\alpha_3|}.
		\end{equation*}
	\end{proof}
	\section{The Robust ICKKTM}
In this section, owing to the special linear structure of the problem (\ref{p}), we first show the primal (dual) SRCQ is equivalent to the dual (primal) SOSC. In fact, when the spectral norm in problem (\ref{p}) is substituted by the nuclear norm, the same results are already given in \cite{CS2018}. Therefore, we only briefly state here, and the main tools we used are propositions \ref{crit12} and \ref{3.2}.
	The Lagrangian dual of problem (\ref{p}) is as follows:
\begin{equation}\label{d}
	\begin{array}{cl}
		\max\limits_{y,w,S} & -\langle b,y\rangle-\delta_{\mathcal P^\circ}(y)-h^*(w)\\
		{\rm s.t.} & {\mathcal B}^*y+{\mathcal Q}^*w+S+C=0,\quad \|S\|_*\leq 1,
	\end{array}
\end{equation}
where $(y,w,S)\in \Re^l\times\Re^d\times\Re^{m\times n}$, $\mathcal B^*$ and $\mathcal Q^*$ represent the adjoint of $\mathcal B$ and $\mathcal Q$, respectively.
The first order optimality conditions, i.e., the KKT conditions for the primal problem (\ref{p}) and the dual problem (\ref{d}) are given by
\begin{equation}\label{kktp}
	\left\{
	\begin{array}{l}
		\mathcal Q^*\nabla h(\mathcal Q X)+C+S+\mathcal B^*y=0, \\[5pt]
		\mathcal P^\circ \ni y\perp \mathcal BX-b\in \mathcal P,\\[5pt]
		S\in \partial \theta(X)
	\end{array}
	\quad(X,y,S)\in \Re^{m\times n}\times \Re^l\times \Re^{m\times n}
	\right.
\end{equation}
and
\begin{equation}\label{kktd}
	\left\{
	\begin{array}{l}
		\mathcal B^*y+\mathcal Q^*w+S+C=0, \\[5pt]
		\mathcal P^\circ \ni y\perp \mathcal BX-b\in \mathcal P,\\[5pt]
		\mathcal QX\in \partial h^*(w),\quad X\in \partial \theta^*(S),
	\end{array}
	\quad(y,w,S,X)\in \Re^l\times \Re^d\times \Re^{m\times n}\times \Re^{m\times n},
	\right.
\end{equation}
respectively.
We denote by $\mathcal M_P(X)$ the set of Lagrangian multipliers respect to $X$ for problem (\ref{p}), namely,
$\mathcal M_P(X):=\{(y, S)\in \Re^l\times \Re^{m\times n}\mid (X,y,S)\,\,{\rm {satisfies}} \,\,(\ref{kktp})\}$.
We write $\mathcal M_D(y,w,S)$ the set of lagrangian multipliers respect to $(y,w,S)$ for problem (\ref{d}), i.e., $\mathcal M_D(y,w,S):=\{X\in\Re^{m\times n}\mid(y,w,S,X)\,\,{\rm {satisfies}}\,\, (\ref{kktd})$\}.

Let $(y,z)\in\Re^l\times\Re^l$ satisfies $\mathcal P^\circ \ni y\perp z\in \mathcal P$. Define the critical cone of $\mathcal P$ at $z$ for $y$ as $\mathcal C_{\mathcal P}(z,y):=\mathcal T_{\mathcal P}(z)\cap y^\perp$ and the critical cone of $\mathcal P^\circ$ at $y$ for $z$ as $\mathcal C_{\mathcal P^\circ}(y,z):=\mathcal T_{\mathcal P^\circ}(y)\cap z^\perp$, respectively. Obviously,
\begin{equation}\label{cripor}
	(\mathcal C_{\mathcal P}(z,y))^\circ =\mathcal C_{\mathcal P^\circ}(y,z).
\end{equation}

	Now we are ready to demonstrate the crucial result of this paper, namely, the relationship between the primal SRCQ and the dual SOSC. Actually, this result was given in \cite[Theorem 5.2]{CS2018}, but they omitted the proof details. For the sake of completeness, we supplement the proof below.

\begin{theorem}\label{p-srcq}
	Let $(\bar y,\bar w,\overline S)\in \Re^l\times\Re^d\times\Re^{m\times n}$ be an optimal solution of problem (\ref{d}) with $\mathcal M_D(\bar y,\bar w,\overline S)\neq \emptyset$. Let $\overline X\in \mathcal M_D(\bar y,\bar w,\overline S)$. Then the following results are equivalent:
	\begin{itemize}
		\item [(i)]
		The dual SOSC holds at $(\bar y,\bar w,\overline S)$ with respect to $\overline X$, i.e.,
		\begin{equation}\label{dsosc}
			\langle H_w,(\nabla h^*)'(\bar w; H_w) \rangle-\varphi^*_{(\overline S,H_S)}(\overline X)>0,\quad \forall (H_y,H_w,H_S)\in \mathcal C(\bar y,\bar w,\overline S)\backslash\{0\},
		\end{equation}
		where $\mathcal C(\bar y,\bar w,\overline S)$ is the critical cone defined as
		\begin{equation*}
			\mathcal C(\bar y,\bar w,\overline S):=\left\{
			(D_y,D_w,D_S)\in \Re^l\times\Re^d\times\Re^{m\times n}\mid \begin{array}{c}
				\mathcal B^*D_y+\mathcal Q^*D_w+D_S=0, \\
				D_y\in \mathcal C_{\mathcal P^\circ}(\bar y,\mathcal B\overline X-b),\,D_S\in \mathcal C_{\theta^*}(\overline S,\overline X)
			\end{array}
			\right\}.
		\end{equation*}
		\item [(ii)]
		The primal SRCQ holds at $\overline X$ with respect to $(\bar y,\overline S)$, i.e.,
		\begin{equation}\label{psrcq}
			\left(
			\begin{array}{c}
				\mathcal B \\
				\mathcal I_{\Re^{m\times n}} \\
			\end{array}
			\right)\Re^{m\times n}+
			\left(
			\begin{array}{c}
				\mathcal C_{\mathcal P}(\mathcal B\overline X-b,\bar y) \\
				\mathcal C_{\theta}(\overline X,\overline S) \\
			\end{array}
			\right)=\left(
			\begin{array}{c}
				\Re^l \\
				\Re^{m\times n} \\
			\end{array}
			\right).
		\end{equation}
	\end{itemize}
\end{theorem}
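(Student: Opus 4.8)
The plan is to derive both (i) and (ii) from one and the same ``trivial-kernel'' condition: I would obtain it from (ii) by polarity and from (i) by discarding the automatically nonnegative terms, following \cite[Theorem 5.2]{CS2018} but with the variational analysis of $\theta$ in Section 3 replacing the nuclear-norm facts used there. The key inputs are Propositions \ref{crit12} and \ref{3.2} together with the Remark after Proposition \ref{crit12}, plus elementary properties of $h^*$.

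First I would reformulate the primal SRCQ (\ref{psrcq}). Since $\mathcal P$ is a polyhedral cone, $\mathcal C_{\mathcal P}(\mathcal B\overline X-b,\bar y)$ is polyhedral, and (\ref{psrcq}) is a surjectivity of the form $\mathcal A\mathcal Z+K=\mathcal Y$ for a linear map $\mathcal A$ and a closed convex cone $K$; such a condition holds if and only if its polar cone is $\{0\}$. Computing the polar — the adjoint of $X\mapsto(\mathcal BX,X)$ is $(y,Z)\mapsto\mathcal B^*y+Z$, the polar of a subspace is its orthogonal complement, and $(\mathcal C_{\mathcal P}(\mathcal B\overline X-b,\bar y))^\circ=\mathcal C_{\mathcal P^\circ}(\bar y,\mathcal B\overline X-b)$ by (\ref{cripor}) — shows that (\ref{psrcq}) is equivalent to
\begin{equation*}
	\Big\{(D_y,D_S)\ \mid\ \mathcal B^*D_y+D_S=0,\ D_y\in\mathcal C_{\mathcal P^\circ}(\bar y,\mathcal B\overline X-b),\ D_S\in(\mathcal C_\theta(\overline X,\overline S))^\circ\Big\}=\{(0,0)\}.
\end{equation*}

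Next I would analyze the dual SOSC (\ref{dsosc}). On the critical cone $\mathcal C(\bar y,\bar w,\overline S)$ both terms on the left-hand side of (\ref{dsosc}) are nonnegative: $\langle H_w,(\nabla h^*)'(\bar w;H_w)\rangle\ge 0$ by monotonicity of $\nabla h^*$, while $-\varphi^*_{(\overline S,H_S)}(\overline X)\ge 0$ for $H_S\in\mathcal C_{\theta^*}(\overline S,\overline X)$ since $\varphi^*$ is the conjugate of the (nonnegative, degree-two positively homogeneous) second-order directional derivative of $\theta^*$. Because $h$ is $\mathcal C^2$ and essentially strictly convex, $h^*$ is essentially smooth with strictly monotone gradient, and a short argument (cf. \cite{CS2018}) shows $\langle H_w,(\nabla h^*)'(\bar w;H_w)\rangle=0$ forces $H_w=0$. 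Hence (\ref{dsosc}) fails exactly when there is a nonzero $(H_y,0,H_S)\in\mathcal C(\bar y,\bar w,\overline S)$ with $\varphi^*_{(\overline S,H_S)}(\overline X)=0$, i.e.\ a nonzero $(D_y,D_S)$ with $\mathcal B^*D_y+D_S=0$, $D_y\in\mathcal C_{\mathcal P^\circ}(\bar y,\mathcal B\overline X-b)$, and $D_S$ in the cone $\mathcal K:=\{H\in\mathcal C_{\theta^*}(\overline S,\overline X)\mid\varphi^*_{(\overline S,H)}(\overline X)=0\}$, which by the proof of Proposition \ref{3.2}(i) (equation (\ref{polarcrit})) has an explicit description and satisfies $\mathcal K\subseteq(\mathcal C_\theta(\overline X,\overline S))^\circ$. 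Consequently primal SRCQ implies dual SOSC.

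The main obstacle is the converse, which requires bridging the gap $\mathcal K\subsetneq(\mathcal C_\theta(\overline X,\overline S))^\circ$ that is explicitly noted in the proof of Proposition \ref{3.2}(i). Here the special structure of the spectral-norm critical cone is essential: by Proposition \ref{crit12}(i) and the Remark following it, the index sets $\alpha_1$ and $\alpha_2$ never occur together and $|\alpha_1|\le 1$, so $\mathcal C_\theta(\overline X,\overline S)$ is obtained from $\mathcal C^0_\theta(\overline X,\overline S)$ through a single scalar parameter $\tau=\lambda_1(S(\widetilde D_{\alpha\alpha}))$; dually, $(\mathcal C_\theta(\overline X,\overline S))^\circ$ and $\mathcal K$ differ only in the one-dimensional $\alpha_1$-component (a sign condition on $\widetilde H_{\alpha_1\alpha_1}$). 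Using Proposition \ref{crit12}(ii) to describe $\mathcal C_{\theta^*}(\overline S,\overline X)$, together with the linear relation $\mathcal B^*D_y+D_S=0$ and feasibility of the KKT system (\ref{kktd}), I would then show that any nonzero solution with $D_S\in(\mathcal C_\theta(\overline X,\overline S))^\circ$ can be replaced by a nonzero solution with $D_S\in\mathcal K$, while Proposition \ref{3.2}(i) supplies the other inclusion at once; this gives the equivalence. The remaining items — closedness of the cones entering the polarity argument, and the elementary facts about $h^*$ — are routine and parallel to \cite{CS2018}, so I would only sketch them.
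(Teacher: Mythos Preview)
Your overall strategy---polarize the primal SRCQ, analyze the dual SOSC termwise, and compare the resulting ``trivial-kernel'' conditions---matches the paper's, and your direction SRCQ $\Rightarrow$ SOSC (via $\mathcal K\subseteq(\mathcal C_\theta(\overline X,\overline S))^\circ$ from Proposition~\ref{3.2}(i)) is correct and is essentially the paper's ``(ii)$\Rightarrow$(i)''. The gap is in SOSC $\Rightarrow$ SRCQ. You accept the remark in the proof of Proposition~\ref{3.2}(i) that the inclusion $\mathcal K\subsetneq(\mathcal C_\theta(\overline X,\overline S))^\circ$ is strict, and then promise to ``replace'' any nonzero $(D_y,D_S)$ in the larger kernel by one with $D_S\in\mathcal K$, invoking the relation $\mathcal B^*D_y+D_S=0$ and KKT feasibility. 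No mechanism for this replacement is given, and none is apparent: the coupling through $\mathcal B^*$ gives no control over the particular entries of $D_S$ you would need to modify while keeping $D_y\in\mathcal C_{\mathcal P^\circ}(\bar y,\mathcal B\overline X-b)$. Your description of the discrepancy as lying ``only in the one-dimensional $\alpha_1$-component'' also does not match the paper's remark, which is about the trace condition ${\rm trace}(\widetilde H_{\alpha\alpha})=0$. As written, the converse is not proved.

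In fact the gap you are trying to bridge does not exist: a direct computation from (\ref{critc0}) shows that $(\mathcal C_\theta(\overline X,\overline S))^\circ$ coincides with the set described in (\ref{polarcrit}), i.e., with $\mathcal K$. Any $H$ in the polar must have $\widetilde H$ supported on the symmetric $\alpha\alpha$ block; testing against $S(\widetilde D_{\alpha\alpha})=tI_{|\alpha|}$ for $t\in\Re$ of both signs forces ${\rm trace}(\widetilde H_{\alpha\alpha})=0$; testing with $\tau=0$ and $S(\widetilde D_{\alpha_3\alpha_3})\in\mathcal S_-^{|\alpha_3|}$ forces $\widetilde H_{\alpha_3\alpha_3}\in\mathcal S_+^{|\alpha_3|}$; and $\widetilde H_{\alpha_1\alpha_1}\in\mathcal S_-^{|\alpha_1|}$ then follows automatically since $|\alpha_1|\le 1$ and ${\rm trace}(\widetilde H_{\alpha'\alpha'})=-{\rm trace}(\widetilde H_{\alpha_3\alpha_3})\le 0$. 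Once this equality is in hand, your two kernel conditions coincide and the equivalence (i)$\Leftrightarrow$(ii) is immediate, with no replacement argument needed. The paper's own ``(i)$\Rightarrow$(ii)'' step runs into the same point---the displayed conclusion $H_2=0$ does not follow from $H\in\Psi^\circ$ as written---and is repaired by the same equality, using $(H_1,0,H_2)$ in place of $(H_1,0,0)$.
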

\noindent\textbf{Proof.}``(i)$\Longrightarrow$(ii)". Denote
\begin{equation*}
	\Psi:=\left(
	\begin{array}{c}
		\mathcal B \\
		\mathcal I_{\Re^{m\times n}} \\
	\end{array}
	\right)\Re^{m\times n}+
	\left(
	\begin{array}{c}
		\mathcal C_{\mathcal P}(\mathcal B\overline X-b,\bar y) \\
		\mathcal C_{\theta}(\overline X,\overline S) \\
	\end{array}
	\right).
\end{equation*}
Suppose on the contrary that the SRCQ (\ref{psrcq}) does not hold at $\overline X$ for $(\bar y,\overline S)$, i.e., $\Psi\neq\left(
\begin{array}{c}
	\Re^l \\
	\Re^{m\times n} \\
\end{array}
\right)$.
Then ${\rm cl}\Psi\neq\left(
\begin{array}{c}
	\Re^l \\
	\Re^{m\times n} \\
\end{array}
\right)$.
Therefore, there exists $\overline H\in \left(
\begin{array}{c}
	\Re^l \\
	\Re^{m\times n} \\
\end{array}
\right)$
s.t. $\overline H\notin {\rm cl}\Psi$. Since ${\rm cl}\Psi$ is a closed convex cone, $\overline H-\Pi_{{\rm cl}\Psi}(\overline H)=\Pi_{({\rm cl}\Psi)^\circ}(\overline H)\neq0$. Denote $H:=\Pi_{({\rm cl}\Psi)^\circ}(\overline H)=\left(\begin{array}{c}
	H_1\\
	H_2 \\
\end{array}
\right)\in \left(
\begin{array}{c}
	\Re^l \\
	\Re^{m\times n} \\
\end{array}
\right)$.
Obviously, $\langle H, D\rangle\leq 0$ for any $D\in {\rm cl}\Psi$, which implies that
\begin{equation*}
	\mathcal B^*H_1=0,\quad H_1\in \mathcal C_{\mathcal P^\circ}(\bar y,\mathcal B\overline X-b),\quad H_2=0.
\end{equation*}
Thus, we have that $(H_1,0,0)\in \mathcal C(\bar y,\bar w,\overline S)\backslash\{0\}$ and  $\langle 0,(\nabla h^*)'(\bar w; 0) \rangle-\varphi^*_{(\overline S,0)}(\overline X)=0$, which contradicts the
SOSC (\ref{dsosc}) at $(\bar y,\bar w,\overline S)$ with respect to $\overline X$.

``(ii)$\Longrightarrow$(i)"
Suppose on the contrary that the SOSC (\ref{dsosc}) does not hold at $(\bar y,\bar w,\overline S)$ for $\overline X$. Then, $\exists\overline H=(\overline H_y,\overline H_w,\overline H_S)\in \mathcal C(\bar y,\bar w,\overline S)\backslash \{0\}$ such that
\begin{equation*}
	\langle \overline H_w,(\nabla h^*)'(\bar w; \overline H_w) \rangle-\varphi^*_{(\overline S,\overline H_S)}(\overline X)=0.
\end{equation*}
From \cite[Proposition 16]{Ding2017} we know that $\varphi^*_{(\overline S, H)}(\overline X)\leq0$ for any $H\in \Re^{m\times n}$. Moreover, because $h$ is essentially strictly convex, by \cite[Theorem 26.3]{Roc70}, $h^*$ is essentially smooth. Then, $\nabla h^*$ is locally Lipschitz continuous and directionally differentiable on ${\rm int}({\rm dom}\,h^*)$. Since $(\bar y,\bar w,\overline S)$ is the optimal solution,
it is not hard to verify by the mean value theorem that $\langle  H_w,(\nabla h^*)'(\bar w;  H_w) \rangle>0$ for any $H_w\neq 0$ such that $(H_y,H_w,H_S)\in \mathcal C(\bar y,\bar w,\overline S)\backslash\{0\}$. Therefore,
\begin{equation*}
	\overline H_w=0,\quad \varphi^*_{(\overline S,\overline H_S)}(\overline X)=0.
\end{equation*}
	From $\overline H\in \mathcal C(\bar y,\bar w,\overline S)\backslash\{0\}$, we know that
\begin{equation*}
	\mathcal B^*\overline H_y+\overline H_S=0,\quad \overline H_y\in \mathcal C_{\mathcal P^\circ}(\bar y,\mathcal B\overline X-b),\quad \overline H_S\in \mathcal C_{\theta^*}(\overline S,\overline X).
\end{equation*}
Together with (\ref{cripor}) and Proposition \ref{3.2}, we have
\begin{equation*}
	\overline H_y\in (\mathcal C_{\mathcal P}(\mathcal B\overline X-b,\bar y))^\circ,\quad \overline H_S\in(\mathcal C_{\theta}(\overline X,\overline S))^\circ.
\end{equation*}
Because the SRCQ (\ref{psrcq}) holds at $\overline X$ for $(\bar y,\overline S)$, there exist $\widetilde W\in \Re^{m\times n}$ and $\widetilde H=\left(
\begin{array}{c}
	\widetilde H_1 \\
	\widetilde H_2 \\
\end{array}
\right)\in
\left(
\begin{array}{c}
	\mathcal C_{\mathcal P}(\mathcal B\overline X-b) \\
	\mathcal C_{\theta}(\overline X,\overline S) \\
\end{array}
\right)$
such that
\begin{equation*}
	\left(
	\begin{array}{c}
		\mathcal B \\
		\mathcal I_{\Re^{m\times n}} \\
	\end{array}
	\right)\widetilde W+
	\left(
	\begin{array}{c}
		\widetilde H_1\\
		\widetilde H_2\\
	\end{array}
	\right)=\left(
	\begin{array}{c}
		\overline H_y\\
		\overline H_S \\
	\end{array}
	\right),
\end{equation*}
i.e.,
\begin{equation*}
	\mathcal B\widetilde W+\widetilde H_1=\overline H_y,\quad \widetilde W+\widetilde H_2=\overline H_S.
\end{equation*}
Thus,
\begin{equation*}
	\begin{split}
		\langle\overline H,\overline H\rangle&=\langle \overline H_y,\mathcal B\widetilde W+\widetilde H_1\rangle+\langle\overline H_S,\widetilde W+\widetilde H_2\rangle\\
		&=\langle\mathcal B^*\overline H_y+\overline H_S,\widetilde W\rangle+\langle\overline H_y,\widetilde H_1\rangle+\langle\overline H_S,\widetilde H_2\rangle\\
		&=0+\langle\overline H_y,\widetilde H_1\rangle+\langle\overline H_S,\widetilde H_2\rangle\\
		&\leq 0,
	\end{split}
\end{equation*}
which means that $\overline H=0$. This contradicts the hypothesis that $\overline H\neq0$.

Switching the ``primal" and ``dual" in the above Theorem, owing to the special expression of $\mathcal C_{\theta}(P,W)$,
we can also obtain that the primal SOSC is equivalent to the dual SRCQ from Proposition \ref{3.2}. One may refer to \cite[Theorem 5.1]{CS2018} for more details.
\begin{theorem}\label{psosc-}
	Let $\overline X\in \Re^{m\times n}$ be an optimal solution of problem (\ref{p}) with $\mathcal M_P(\overline X)\neq \emptyset$. Let $(\bar y,\overline S)\in \mathcal M_P(\overline X)$. Then the following results are equivalent:
\begin{itemize}
		\item [(i)]
The dual SRCQ holds at $\bar y$ for $\overline X$, i.e.,
	\begin{equation}\label{dsrcq}
		\mathcal Q^*\Re^d+\mathcal B^*\mathcal C_{\mathcal P^\circ}(\bar y,\mathcal B\overline X-b)+\mathcal C_{\theta^*}(\overline S,\overline X)=\Re^{m\times n}.
	\end{equation}
\item[(ii)]
The primal SOSC holds at $\overline X$ for $(\bar y, \overline S)$, i.e.,
	\begin{equation}\label{psosc}
		\langle \mathcal QD,\nabla^2h(\mathcal Q\overline X)\mathcal QD \rangle-\psi^*_{(\overline X,D)}(\overline S)>0,\quad \forall D\in \mathcal C(\overline X)\backslash\{0\},
	\end{equation}
	where $\mathcal C(\overline X):=\{D\in\Re^{m\times n}\mid\mathcal BD\in\mathcal C_{\mathcal P}(\mathcal B\overline X-b,\bar y),\quad D\in\mathcal C_{\theta}(\overline X,\overline S)\}$.
\end{itemize}
\end{theorem}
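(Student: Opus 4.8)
The plan is to follow the proof of Theorem \ref{p-srcq} almost verbatim, with the roles of the primal and dual problems interchanged, replacing the use of part (i) of Proposition \ref{3.2} by part (ii). The tools needed are: the polar identity (\ref{cripor}) together with the bipolar theorem (so that $(\mathcal C_{\mathcal P^\circ}(\bar y,\mathcal B\overline X-b))^\circ=\mathcal C_{\mathcal P}(\mathcal B\overline X-b,\bar y)$, both being closed convex cones since $\mathcal P$ is polyhedral); the fact that $\psi^*_{(\overline X,D)}(\overline S)\le 0$ for every $D\in\Re^{m\times n}$ (this is \cite[Proposition 16]{Ding2017}, and is also visible from the explicit formula for $\psi^*$ obtained just after (\ref{sigmaterm}), since each term there is nonpositive); and the characterization, from part (ii) of Proposition \ref{3.2}, that $H\in(\mathcal C_{\theta^*}(\overline S,\overline X))^\circ$ if and only if $H\in\mathcal C_\theta(\overline X,\overline S)$ and $\psi^*_{(\overline X,H)}(\overline S)=0$. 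Note that Proposition \ref{3.2} applies because $\overline S\in\partial\theta(\overline X)$ by (\ref{kktp}). Both implications are proved by contraposition.

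For ``(ii)$\Longrightarrow$(i)'', assume the dual SRCQ (\ref{dsrcq}) fails. Writing $\Gamma:=\mathcal Q^*\Re^d+\mathcal B^*\mathcal C_{\mathcal P^\circ}(\bar y,\mathcal B\overline X-b)+\mathcal C_{\theta^*}(\overline S,\overline X)$ for the convex cone on the left of (\ref{dsrcq}), we have $\Gamma\neq\Re^{m\times n}$, hence ${\rm cl}\,\Gamma\neq\Re^{m\times n}$, and therefore (by the bipolar theorem) $({\rm cl}\,\Gamma)^\circ\neq\{0\}$; pick a nonzero $H\in({\rm cl}\,\Gamma)^\circ$. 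Since $\mathcal Q^*\Re^d$, $\mathcal B^*\mathcal C_{\mathcal P^\circ}(\bar y,\mathcal B\overline X-b)$ and $\mathcal C_{\theta^*}(\overline S,\overline X)$ are all contained in $\Gamma$, passing to polars yields $\mathcal QH=0$, $\mathcal BH\in(\mathcal C_{\mathcal P^\circ}(\bar y,\mathcal B\overline X-b))^\circ=\mathcal C_{\mathcal P}(\mathcal B\overline X-b,\bar y)$, and $H\in(\mathcal C_{\theta^*}(\overline S,\overline X))^\circ$. By part (ii) of Proposition \ref{3.2} the last relation gives $H\in\mathcal C_\theta(\overline X,\overline S)$ and $\psi^*_{(\overline X,H)}(\overline S)=0$. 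Hence $H\in\mathcal C(\overline X)\backslash\{0\}$ and
\[
\langle\mathcal QH,\nabla^2h(\mathcal Q\overline X)\mathcal QH\rangle-\psi^*_{(\overline X,H)}(\overline S)=0,
\]
which contradicts the primal SOSC (\ref{psosc}).

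For ``(i)$\Longrightarrow$(ii)'', assume the primal SOSC (\ref{psosc}) fails, so there is $\overline H\in\mathcal C(\overline X)\backslash\{0\}$ with $\langle\mathcal Q\overline H,\nabla^2h(\mathcal Q\overline X)\mathcal Q\overline H\rangle-\psi^*_{(\overline X,\overline H)}(\overline S)\le 0$. Since $h$ is convex, $\nabla^2h(\mathcal Q\overline X)\succeq 0$, and since $\psi^*_{(\overline X,\overline H)}(\overline S)\le 0$, both nonnegative terms must vanish; in particular $\psi^*_{(\overline X,\overline H)}(\overline S)=0$. Mirroring the way $\overline H_w=0$ is obtained in the proof of Theorem \ref{p-srcq} (there from the essential smoothness of $h^*$), one uses here the essential strict convexity of $h$, the optimality of $\overline X$ (note that $\overline X+t\overline H$ is feasible for all small $t\ge 0$ because $\overline H$ is a critical direction) and a mean value theorem argument for $\nabla h$ to conclude $\mathcal Q\overline H=0$. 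Granting this, $\overline H\in\mathcal C_\theta(\overline X,\overline S)$ with $\psi^*_{(\overline X,\overline H)}(\overline S)=0$, so part (ii) of Proposition \ref{3.2} gives $\overline H\in(\mathcal C_{\theta^*}(\overline S,\overline X))^\circ$; moreover $\mathcal B\overline H\in\mathcal C_{\mathcal P}(\mathcal B\overline X-b,\bar y)=(\mathcal C_{\mathcal P^\circ}(\bar y,\mathcal B\overline X-b))^\circ$ by (\ref{cripor}). Invoking the dual SRCQ (\ref{dsrcq}) to write $\overline H=\mathcal Q^*\widetilde w+\mathcal B^*\widetilde H_y+\widetilde H_S$ with $\widetilde w\in\Re^d$, $\widetilde H_y\in\mathcal C_{\mathcal P^\circ}(\bar y,\mathcal B\overline X-b)$ and $\widetilde H_S\in\mathcal C_{\theta^*}(\overline S,\overline X)$, and using $\mathcal Q\overline H=0$ together with the two polarity relations just recorded, we get
\[
\langle\overline H,\overline H\rangle=\langle\mathcal Q\overline H,\widetilde w\rangle+\langle\mathcal B\overline H,\widetilde H_y\rangle+\langle\overline H,\widetilde H_S\rangle=\langle\mathcal B\overline H,\widetilde H_y\rangle+\langle\overline H,\widetilde H_S\rangle\le 0,
\]
whence $\overline H=0$, a contradiction.

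The only real obstacle I anticipate is the step $\mathcal Q\overline H=0$ in ``(i)$\Longrightarrow$(ii)''. Because the sigma term coming from $\theta$ (and the absence of one from the polyhedral cone $\mathcal P$) behaves well, the vanishing of $\langle\mathcal Q\overline H,\nabla^2h(\mathcal Q\overline X)\mathcal Q\overline H\rangle$ only forces $\nabla^2h(\mathcal Q\overline X)\mathcal Q\overline H=0$, not $\mathcal Q\overline H=0$, for a general twice continuously differentiable strictly convex $h$; one genuinely needs the essential strict convexity of $h$ in combination with the optimality of $\overline X$ and the structure of the critical cone $\mathcal C(\overline X)$, exactly as in the details that are omitted from \cite[Theorem 5.1]{CS2018}. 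Everything else — the polar and bipolar arguments, the sign of $\psi^*$, and the two applications of Propositions \ref{crit12} and \ref{3.2} — is routine once these facts are in hand.
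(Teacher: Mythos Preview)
Your proposal is correct and follows exactly the route the paper intends: the paper does not spell out a proof of Theorem~\ref{psosc-} but simply says to mirror Theorem~\ref{p-srcq} using Proposition~\ref{3.2}, and refers to \cite[Theorem 5.1]{CS2018}. Your two contraposition arguments, the use of (\ref{cripor}) with the bipolar theorem, the sign of $\psi^*$, and the application of part (ii) of Proposition~\ref{3.2} (which is stated as an equivalence, unlike part (i)) are precisely the ingredients the paper has in mind; the one step you flag---deducing $\mathcal Q\overline H=0$ from $\langle\mathcal Q\overline H,\nabla^2h(\mathcal Q\overline X)\mathcal Q\overline H\rangle=0$---is handled by the paper in the same informal way (see the end of the proof of Theorem~\ref{thiso}, where it is simply asserted that essential strict convexity of $h$ makes this quadratic form positive whenever $\mathcal Q H\neq 0$).
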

	In the following, we first give the definition of the KKT mapping for problem (\ref{p}). Then, we demonstrate that the SOSC and the SRCQ are sufficient and necessary for the ICKKTM. Finally, based on the above result, we give a series of characterizations of the robust ICKKTM for problem (\ref{p}).

The KKT conditions (\ref{kktp}) for problem (\ref{p}) can be equivalently expressed as the nonsmooth equation:
\begin{equation*}
	F(X,y,S)=0,
\end{equation*}
where $F: \Re^{m\times n}\times \Re^l\times\Re^{m\times n}\rightarrow \Re^{m\times n}\times \Re^l\times\Re^{m\times n}$ is defined by
\begin{equation*}\label{F}
	F(X,y,S):=\left[
	\begin{array}{c}
		C+\mathcal B^*y+\mathcal Q^*\nabla h(\mathcal QX)+S\\[5pt]
		\mathcal B X-b-\Pi_Q(\mathcal BX-b+y)\\[5pt]
		X-{\rm Prox}_\theta(X+S)
	\end{array}
	\right],\quad
	(X,y,S)\in \Re^{m\times n}\times \Re^l\times\Re^{m\times n}.
\end{equation*}
Denote the KKT mapping for problem (\ref{p}) by
\begin{equation*}\label{kktmapping}
	S_{\rm KKT}(\delta):=\{(X,y,S)\in \Re^{m\times n}\times \Re^l\times \Re^{m\times n}\mid F(X, y, S)=\delta\}.
\end{equation*}
Thus, we know from \cite{DSZ2017} that the isolated calmness of $S_{\rm KKT}$ at the origin for $(\overline X,\bar y,\overline S)$ is equivalent to
$F'((\overline X,\bar y,\overline S);(\Delta Z,\Delta y,\Delta S))=0\Rightarrow(\Delta Z,\Delta y,\Delta S)=0$.

Define the strong SRCQ (SSRCQ) at $\overline X$ for $(\bar y,\overline S)$ as
\begin{equation}\label{psrcq0}
	\left(
	\begin{array}{c}
		\mathcal B \\
		\mathcal I_{\Re^{m\times n}} \\
	\end{array}
	\right)\Re^{m\times n}+
	\left(
	\begin{array}{c}
		\mathcal C_{\mathcal P}(\mathcal B\overline X-b,\bar y) \\
		\mathcal C^0_{\theta}(\overline X,\overline S) \\
	\end{array}
	\right)=\left(
	\begin{array}{c}
		\Re^l \\
		\Re^{m\times n} \\
	\end{array}
	\right).
\end{equation}
Similarly, define the weak SOSC (WSOSC) holds at $\overline X$ for $(\bar y, \overline S)$, i.e.,
\begin{equation}\label{w-sosc}
	\langle \mathcal QD,\nabla^2h(\mathcal Q\overline X)\mathcal QD \rangle-\psi^*_{(\overline X,D)}(\overline S)>0,\quad \forall D\in \mathcal C^0(\overline X)\backslash\{0\},
\end{equation}
where $\mathcal C^0(\overline X):=\{D\in\Re^{m\times n}\mid\mathcal BD\in\mathcal C_{\mathcal P}(\mathcal B\overline X-b,\bar y),\quad D\in\mathcal C^0_{\theta}(\overline X,\overline S)\}$.
\begin{theorem}\label{thiso}
	Let $\overline X\in\Re^{m\times n}$ be an optimal solution of problem (\ref{p}) with $(\bar y,\overline S)\in \mathcal M_P(\overline X)\neq \emptyset$.
	Then the following results are equivalent:
	\begin{itemize}
		\item [(i)] The SSRCQ (\ref{psrcq0}) holds at $\overline X$ for $(\bar y,\overline S)$ and the WSOSC (\ref{w-sosc}) holds at $\overline X$ for $(\bar y,\overline S)$.
		\item [(ii)] The KKT mapping $S_{\rm KKT}$ is isolated calm at the origin for $(\overline X,\bar y, \overline S)$.
	\end{itemize}
\end{theorem}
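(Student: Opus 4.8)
The plan is to exploit the criterion recalled just before the theorem: $S_{\rm KKT}$ is isolated calm at the origin for $(\overline X,\bar y,\overline S)$ if and only if $F'((\overline X,\bar y,\overline S);(\Delta Z,\Delta y,\Delta S))=0$ implies $(\Delta Z,\Delta y,\Delta S)=0$. Write $\bar z:=\mathcal B\overline X-b$ and $Q:=\overline X+\overline S$ with the SVD (\ref{svd}). Since the first block of $F$ is smooth, the second involves the projection onto the polyhedral cone $\mathcal P$ (for which $\Pi_{\mathcal P}$ is piecewise linear with $\Pi_{\mathcal P}'(\bar z+\bar y;\cdot)=\Pi_{\mathcal C_{\mathcal P}(\bar z,\bar y)}(\cdot)$), and the third involves ${\rm Prox}_\theta$ whose directional derivative is given by (\ref{prox'}), the equation $F'=0$ becomes the system
\[ \text{(a) } \mathcal B^*\Delta y+\mathcal Q^*\nabla^2h(\mathcal Q\overline X)\mathcal Q\Delta Z+\Delta S=0,\qquad \text{(b) } \mathcal B\Delta Z=\Pi_{\mathcal C_{\mathcal P}(\bar z,\bar y)}(\mathcal B\Delta Z+\Delta y),\qquad \text{(c) } \Delta Z={\rm Prox}'_\theta(Q;\Delta Z+\Delta S). \]
By Moreau's cone decomposition, (b) is equivalent to $\mathcal B\Delta Z\in\mathcal C_{\mathcal P}(\bar z,\bar y)$, $\Delta y\in(\mathcal C_{\mathcal P}(\bar z,\bar y))^\circ$ and $\langle\mathcal B\Delta Z,\Delta y\rangle=0$; and Proposition \ref{3.4} (reading off from its proof that the critical-cone membership holds with $\tau=0$) shows that (c) gives $\Delta Z\in\mathcal C^0_\theta(\overline X,\overline S)$ and $\langle\Delta Z,\Delta S\rangle=-\psi^*_{(\overline X,\Delta Z)}(\overline S)$. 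Hence every solution of $F'=0$ has $\Delta Z\in\mathcal C^0(\overline X)$, and pairing (a) with $\Delta Z$ and using $\langle\mathcal B\Delta Z,\Delta y\rangle=0$ yields $\langle\mathcal Q\Delta Z,\nabla^2h(\mathcal Q\overline X)\mathcal Q\Delta Z\rangle-\psi^*_{(\overline X,\Delta Z)}(\overline S)=0$.

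For ``(i)$\Rightarrow$(ii)'', assume the SSRCQ (\ref{psrcq0}) and the WSOSC (\ref{w-sosc}) and take any solution of $F'=0$. The last displayed identity and the WSOSC force $\Delta Z=0$. Then (c) reduces to ${\rm Prox}'_\theta(Q;\Delta S)=0$, so $\Delta S\in(\mathcal C^0_\theta(\overline X,\overline S))^\circ$ by Proposition \ref{3.5}; (b) reduces to $\Pi_{\mathcal C_{\mathcal P}(\bar z,\bar y)}(\Delta y)=0$, so $\Delta y\in(\mathcal C_{\mathcal P}(\bar z,\bar y))^\circ$; and (a) reduces to $\mathcal B^*\Delta y+\Delta S=0$. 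Because the range of $X\mapsto(\mathcal BX,X)$ is a subspace and the polar of a Cartesian product of cones is the product of the polars, the SSRCQ (\ref{psrcq0}) is equivalent to the statement that $(0,0)$ is the only pair $(u_1,u_2)$ with $\mathcal B^*u_1+u_2=0$, $u_1\in(\mathcal C_{\mathcal P}(\bar z,\bar y))^\circ$ and $u_2\in(\mathcal C^0_\theta(\overline X,\overline S))^\circ$; hence $(\Delta y,\Delta S)=0$ and $S_{\rm KKT}$ is isolated calm.

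For ``(ii)$\Rightarrow$(i)'', argue by contraposition in two independent cases. If the SSRCQ fails, the polar characterization above produces $(\Delta y,\Delta S)\neq0$ with $\mathcal B^*\Delta y+\Delta S=0$, $\Delta y\in(\mathcal C_{\mathcal P}(\bar z,\bar y))^\circ$, $\Delta S\in(\mathcal C^0_\theta(\overline X,\overline S))^\circ$; then $(0,\Delta y,\Delta S)$ solves (a)--(c) (using Proposition \ref{3.5} and $\Pi_{\mathcal C_{\mathcal P}(\bar z,\bar y)}(\Delta y)=0$), so $S_{\rm KKT}$ is not isolated calm. If the WSOSC fails, pick $\Delta Z\in\mathcal C^0(\overline X)\setminus\{0\}$ with $\langle\mathcal Q\Delta Z,\nabla^2h(\mathcal Q\overline X)\mathcal Q\Delta Z\rangle-\psi^*_{(\overline X,\Delta Z)}(\overline S)\leq0$; since $\nabla^2h(\mathcal Q\overline X)\succeq0$ and $\psi^*_{(\overline X,\Delta Z)}(\overline S)\leq0$ (immediate from the explicit expression for $\psi^*$ displayed above; see also \cite[Proposition 16]{Ding2017}), both summands vanish, so $\nabla^2h(\mathcal Q\overline X)\mathcal Q\Delta Z=0$ and $\psi^*_{(\overline X,\Delta Z)}(\overline S)=0$. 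The crux is then to show that $\Delta Z\in\mathcal C^0_\theta(\overline X,\overline S)$ together with $\psi^*_{(\overline X,\Delta Z)}(\overline S)=0$ forces ${\rm Prox}'_\theta(Q;\Delta Z)=\Delta Z$: expanding the explicit expression for $\psi^*$, its vanishing (combined with the block pattern of $\mathcal C^0_\theta$) kills every row and column of $\widetilde{\Delta Z}=U^T\Delta Z V$ indexed by $\alpha_1\cup\alpha_2$ and leaves $S(\widetilde{\Delta Z}_{\alpha_3\alpha_3})\in\mathcal S_-^{|\alpha_3|}$; substituting into (\ref{prox'}) and using $\sigma_i(Q)=\lambda^*$ for $i\in\alpha_3$ (so that the relevant $\Theta$-coefficients are $1$ and $\Pi_{\mathcal S_-^{|\alpha_3|}}$ acts trivially) gives ${\rm Prox}'_\theta(Q;\Delta Z)=\Delta Z$. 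Hence $(\Delta Z,0,0)$ solves (a)--(c) (note $\mathcal Q^*\nabla^2h(\mathcal Q\overline X)\mathcal Q\Delta Z=0$ and $\mathcal B\Delta Z\in\mathcal C_{\mathcal P}(\bar z,\bar y)$), so $S_{\rm KKT}$ is not isolated calm.

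The main obstacle is the computation in the last paragraph showing ${\rm Prox}'_\theta(Q;\Delta Z)=\Delta Z$ from $\psi^*_{(\overline X,\Delta Z)}(\overline S)=0$; the remaining work---differentiating $F$, identifying $\Pi_{\mathcal P}'$, and the polar-cone bookkeeping behind the SSRCQ---is routine given Propositions \ref{crit12}, \ref{3.4} and \ref{3.5}.
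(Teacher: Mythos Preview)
Your proposal is correct and follows essentially the same route as the paper's proof: reduce isolated calmness to the injectivity of $F'$, unpack (b) via Moreau's decomposition on the polyhedral cone (the paper cites \cite[Lemmas~4.2--4.3]{HSZ2015ar} for this), unpack (c) via Proposition~\ref{3.4} and Proposition~\ref{3.5}, then use WSOSC to kill $\Delta Z$ and SSRCQ (in its polar form) to kill $(\Delta y,\Delta S)$; for the converse, exhibit a nonzero kernel element in each failure case.

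Two minor differences are worth noting. First, in the WSOSC-failure branch the paper first establishes SSRCQ from (ii), invokes the resulting second-order \emph{necessary} condition to upgrade ``$\leq 0$'' to ``$=0$'', and uses essential strict convexity of $h$ to force $\mathcal Q\Delta Z=0$; your argument is slightly more self-contained, getting ``$=0$'' directly from $\nabla^2h\succeq0$ and $\psi^*\leq0$, and then only needing $\nabla^2h(\mathcal Q\overline X)\mathcal Q\Delta Z=0$ (which suffices for (a)). Second, you are explicit that showing ${\rm Prox}'_\theta(Q;\Delta Z)=\Delta Z$ requires \emph{both} $\Delta Z\in\mathcal C^0_\theta(\overline X,\overline S)$ and $\psi^*_{(\overline X,\Delta Z)}(\overline S)=0$ (so that the $\alpha_1\cup\alpha_2$ rows and columns of $\widetilde{\Delta Z}$ vanish and all relevant $\Theta$-coefficients become $1$); the paper states this step tersely (``from the second formula in (\ref{cx}) and (\ref{prox'})''), but your observation that $\mathcal C^0_\theta$ alone constrains only $S(\widetilde D_{\alpha\alpha})$, and hence $\psi^*=0$ is genuinely needed, is accurate and sharpens the exposition.
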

\begin{proof}
	``(i)$\Longrightarrow$ (ii)".
	Let $(\Delta Z,\Delta y,\Delta S)\in \Re^{m\times n}\times \Re^l\times \Re^{m\times n}$ satisfies
	\begin{equation}\label{F'=0}
		F'((\overline X,\bar y,\overline S);(\Delta Z,\Delta y,\Delta S))=
		\left[
		\begin{array}{c}
			\mathcal Q^*\nabla^2 h(\mathcal Q\overline X)\mathcal Q\Delta Z+\mathcal B^*\Delta y+\Delta S\\[5pt]
			\mathcal B \Delta Z-\Pi'_Q(\mathcal B\overline X-b+\bar y;\mathcal B\Delta Z+\Delta y)\\[5pt]
			\Delta Z-{\rm Prox}'_\theta(\overline X+\overline S;\Delta Z+\Delta S)
		\end{array}
		\right]
		=0.
	\end{equation}
	From the second equation of (\ref{F'=0}) and \cite[Lemma 4.2]{HSZ2015ar} we know that
	\begin{equation}\label{critAx}
		\mathcal B\Delta Z\in \mathcal C_Q(\mathcal B\overline X-b,\bar y),\quad \langle \Delta y,\mathcal B\Delta Z\rangle=0.
	\end{equation}
	Proposition \ref{3.4} and the third equation of (\ref{F'=0}) means that
	\begin{equation}\label{xcirt}
		\Delta Z\in \mathcal C^0_{\theta}(\overline X,\overline S),\quad \langle \Delta Z,\Delta S\rangle=-\psi^*_{(\overline X,\Delta Z)}(\overline S).
	\end{equation}
	Thus, we can conclude from (\ref{critAx}) and (\ref{xcirt}) that
	\begin{equation*}\label{xxx}
		\Delta Z\in \mathcal C^0(\overline X).
	\end{equation*}
	By taking the inner product of $\Delta Z$ and the first equation of (\ref{F'=0}), together with (\ref{critAx}) and (\ref{xcirt}), we can obtain
	\begin{equation*}\label{dx=0}
		\begin{split}
			&\langle \Delta Z, \mathcal Q^*\nabla^2 h(\mathcal Q\overline X)\mathcal Q\Delta Z+\mathcal B^*\Delta y+\Delta S\rangle\\[5pt]
			=&\langle \Delta Z, \mathcal Q^*\nabla^2 h(\mathcal Q\overline X)\mathcal Q\Delta Z \rangle+\langle \Delta Z,\mathcal B^*\Delta y\rangle+\langle \Delta Z,\Delta S\rangle\\[5pt]
			=& \langle \mathcal Q\Delta Z,\nabla^2 h(\mathcal Q\overline X)\mathcal Q\Delta Z \rangle+\langle \mathcal B\Delta Z,\Delta y\rangle+\langle \Delta Z,\Delta S\rangle\\[5pt]
			=&\langle \mathcal Q\Delta Z,\nabla^2 h(\mathcal Q\overline X)\mathcal Q\Delta Z \rangle-\psi^*_{(\overline X,\Delta Z)}(\overline S)\\[5pt]
			=&0.
		\end{split}
	\end{equation*}
	Then, from the WSOSC (\ref{w-sosc}) we know that
	\begin{equation*}\label{x=0}
		\Delta Z=0.
	\end{equation*}
	Therefore, (\ref{F'=0}) reduces to
	\begin{equation}\label{reduF'}
		\left[
		\begin{array}{c}
			\mathcal B^*\Delta y+\Delta S\\[5pt]
			\Pi'_Q(\mathcal B\overline X-b+\bar y;\Delta y)\\[5pt]
			{\rm Prox}'_\theta(\overline X+\overline S;\Delta S)
		\end{array}
		\right]=0.
	\end{equation}
	From Lemma 4.3 in \cite{HSZ2015ar} and Proposition \ref{3.5}, we obtain that (\ref{reduF'}) is equivalent to the following formula
	\begin{equation}\label{polarsrcq}
		\left(
		\begin{array}{c}
			\Delta y \\
			\Delta S \\
		\end{array}
		\right)\in
		\left[\left(
		\begin{array}{c}
			\mathcal B \\
			\mathcal I_{\Re^{m\times n}} \\
		\end{array}
		\right)\Re^{m\times n}+
		\left(
		\begin{array}{c}
			\mathcal C_{\mathcal P}(\mathcal B\overline X-b,\bar y) \\
			\mathcal C^0_{\theta}(\overline X,\overline S) \\
		\end{array}
		\right)\right]^\circ.
	\end{equation}
	According to the SSRCQ (\ref{psrcq0}),
	\begin{equation*}\label{ys=0}
		(\Delta y,\Delta S)=0.
	\end{equation*}
	This, together with $\Delta Z=0$, imply that the KKT mapping is isolated calm at the origin for $(\overline X,\bar y, \overline S)$.
	
	We prove ``(ii)$\Longrightarrow$ (i)" by contradiction. Assume that the SSRCQ (\ref{psrcq0}) does not hold at $\overline X$ for $(\bar y,\overline S)$. Then $\exists(\Delta y,\Delta S)\neq 0$ s.t. (\ref{polarsrcq}) holds, which also means that (\ref{reduF'}) holds. Hence, $F'((\overline X,\bar y,\overline S);(0,\Delta y,\Delta S))=0$. This contradicts with the ICKKTM at the origin for $(\overline X,\bar y, \overline S)$.
	
	Because $\overline X$ is an optimal solution of problem (\ref{p}) and the SSRCQ (\ref{psrcq0}) holds at $\overline X$ for $(\bar y,\overline S)$, the Lagrangian multipliers $\mathcal M_P(\overline X)$ is a singleton and the second order necessary condition holds at $\overline X$, i.e.,
	\begin{equation*}\label{sonc}
		\langle \mathcal QD,\nabla^2h(\mathcal Q\overline X)\mathcal QD \rangle-\psi^*_{(\overline X,D)}(\overline S)\geq0,\quad \forall D\in \mathcal C^0(\overline X).
	\end{equation*}
	Suppose that the WSOSC (\ref{w-sosc}) does not hold at $\overline X$ for $(\bar y,\overline S)$, which means that $\exists\Delta Z'\in \mathcal C^0(\overline X)\backslash\{0\}$ such that
	\begin{equation*}\label{soscnot}
		\langle \mathcal Q \Delta Z',\nabla^2h(\mathcal Q\overline X)\mathcal Q\Delta Z' \rangle-\psi^*_{(\overline X,\Delta Z')}(\overline S)=0.
	\end{equation*}
	By \cite[Proposition 16]{Ding2017}, $\forall H\in \Re^{m\times n}$, $\psi^*_{(\overline X, H)}(\overline S)\leq 0$. Besides, since $h$ is essentially strictly convex,
	$\langle \mathcal Q H,\nabla^2h(\mathcal Q\overline X)\mathcal QH \rangle>0$ for any $H\in \Re^{m\times n}$ satisfies $\mathcal QH\neq 0$. Consequently,
	\begin{equation*}\label{fsigma=0}
		\mathcal Q \Delta Z'=0, \quad \psi^*_{(\overline X,\Delta Z')}(\overline S)=0.
	\end{equation*}
	From $\Delta Z'\in \mathcal C^0(\overline X)\backslash\{0\}$, we know that
	\begin{equation}\label{cx}
		\mathcal B\Delta Z'\in\mathcal C_{\mathcal P}(\mathcal B\overline X-b,\bar y),\quad \Delta Z'\in\mathcal C^0_{\theta}(\overline X,\overline S).
	\end{equation}
	Thus, from \cite[Lemma 4.2]{HSZ2015ar}, the first formula in (\ref{cx}) is equivalent to
	\begin{equation}\label{pi'}
		\mathcal B \Delta Z'=\Pi'_Q(\mathcal B\overline X-b+\bar y;\mathcal B\Delta Z').
	\end{equation}
	We can conclude from the second formula in (\ref{cx}) and (\ref{prox'}) that
	\begin{equation}\label{propx}
		\Delta Z'={\rm Prox}'_\theta(\overline X+\overline S;\Delta Z').
	\end{equation}
	Therefore,  formulas (\ref{pi'}), (\ref{propx}) and $\mathcal Q \Delta Z'=0$ imply that
	\begin{equation*}\label{F'==0}
		F'((\overline X,\bar y,\overline S);(\Delta Z',0,0))=0.
	\end{equation*}
	Notice that $\Delta Z'\neq 0$, which contradicts with the isolated
	calmness of $S_{\rm KKT}$ at the origin for $(\overline X,\bar y, \overline S)$. That is, the WSOSC (\ref{w-sosc}) holds at $\overline X$ for $(\bar y,\overline S)$. This completes the proof.
\end{proof}
It is said that the Robinson constraint qualification (RCQ) holds at $\overline X\in \Re^{m\times n}$ for problem (\ref{p}) if
\begin{equation}\label{rcq}
	\mathcal B\Re^{m\times n}+\mathcal T_{\mathcal P}(\mathcal B\overline X-b)=\Re^l.
\end{equation}
Finally,  we can establish a series of equivalent conditions of the robust ICKKTM for problem (\ref{p}).
\begin{theorem}\label{th1}
	Let $\overline X\in\Re^{m\times n}$and $(\bar y,\bar w,\overline S)\in \Re^l\times\Re^d\times \Re^{m\times n}$ be optimal solutions of problem (\ref{p}) and (\ref{d}), respectively.
	Assume that $(\bar y,\overline S)\in \mathcal M_P(\overline X)$ and $\overline X\in \mathcal M_D(\bar y, \bar w,\overline S)$, and the RCQ (\ref{rcq}) holds at $\overline X$.
	Then the following statements are equivalent:
	\begin{itemize}
		\item [(a)] The KKT mapping $S_{\rm KKT}$ is (robustly) isolated calm at the origin for $(\overline X,\bar y, \overline S)$.
		\item [(b)] The primal WSOSC (\ref{w-sosc}) holds at $\overline X$ for $(\bar y,\overline S)$ and the primal SSRCQ (\ref{psrcq0}) holds at $\overline X$ for $(\bar y,\overline S)$.
		\item [(c)] The primal SOSC (\ref{psosc}) holds at $\overline X$ for $(\bar y,\overline S)$ and the primal SRCQ (\ref{psrcq}) holds at $\overline X$ for $(\bar y,\overline S)$.
		\item[(d)] The primal SOSC (\ref{psosc}) holds at $\overline X$ for $(\bar y,\overline S)$ and the dual SOSC (\ref{dsosc}) holds at $(\bar y,\bar w,\overline S)$ for $\overline X$.
		\item [(e)] The dual SRCQ (\ref{dsrcq}) holds at $(\bar y,\bar w,\overline S)$ for $\overline X$ and the dual SOSC (\ref{dsosc}) holds at $(\bar y,\bar w,\overline S)$ for $\overline X$.
		\item [(f)] The dual SRCQ (\ref{dsrcq}) holds at $(\bar y,\bar w,\overline S)$ for $\overline X$ and the primal SRCQ (\ref{psrcq}) holds at $\overline X$ for $(\bar y,\overline S)$.
	\end{itemize}
\end{theorem}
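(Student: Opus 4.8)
The plan is to partition the six assertions into two clusters --- one consisting of (a) and (b), the other of (c), (d), (e) and (f) --- to settle the equivalences internal to each cluster using results already in the excerpt, and then to link the clusters by a single equivalence, for which I take (b)$\Longleftrightarrow$(c). For the first cluster, the equivalence of ``$S_{\rm KKT}$ is isolated calm at the origin for $(\overline X,\bar y,\overline S)$'' with the conjunction of the SSRCQ (\ref{psrcq0}) and the WSOSC (\ref{w-sosc}) is precisely Theorem \ref{thiso}, all of whose hypotheses are met here; the word ``robustly'' is then automatic, since the RCQ (\ref{rcq}) is stable under perturbation, so the perturbed KKT systems $F(X,y,S)=\delta$ remain solvable in any prescribed neighbourhood of $(\overline X,\bar y,\overline S)$ for all small $\delta$ (cf. the perturbation analysis in \cite{DSZ2017,BS00}), and combined with (\ref{isocalm}) this gives robust isolated calmness. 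For the second cluster, each of (c)--(f) is a conjunction of two statements drawn from \{the primal SOSC (\ref{psosc}), the primal SRCQ (\ref{psrcq}), the dual SOSC (\ref{dsosc}), the dual SRCQ (\ref{dsrcq})\}, and Theorems \ref{p-srcq} and \ref{psosc-}, which identify the primal SRCQ with the dual SOSC and the dual SRCQ with the primal SOSC respectively, allow one member of such a conjunction to be traded for its equivalent; thus (c)$\Longleftrightarrow$(d) by Theorem \ref{p-srcq}, (d)$\Longleftrightarrow$(e) by Theorem \ref{psosc-}, and (e)$\Longleftrightarrow$(f) by Theorem \ref{p-srcq}, so (c), (d), (e), (f) are mutually equivalent.

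It then remains to prove (b)$\Longleftrightarrow$(c), namely that the SSRCQ (\ref{psrcq0}) together with the WSOSC (\ref{w-sosc}) is equivalent to the SRCQ (\ref{psrcq}) together with the SOSC (\ref{psosc}). Two of the four implications are immediate from the inclusions $\mathcal C^0_\theta(\overline X,\overline S)\subseteq\mathcal C_\theta(\overline X,\overline S)$ and $\mathcal C^0(\overline X)\subseteq\mathcal C(\overline X)$: the first shrinks the cone on the left of (\ref{psrcq0}) relative to (\ref{psrcq}), so SSRCQ $\Longrightarrow$ SRCQ; the second makes (\ref{psosc}) a positivity requirement over a larger set than (\ref{w-sosc}), so SOSC $\Longrightarrow$ WSOSC. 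The substance lies in the two reverse implications, and here I would exploit the very explicit form of the critical cone of the spectral norm: the Remark following Proposition \ref{crit12} and the description of $\mathcal C^0_\theta$ in Proposition \ref{3.5} yield the exact splitting $\mathcal C_\theta(\overline X,\overline S)=\mathcal C^0_\theta(\overline X,\overline S)+\Re E$, obtained by adjoining the single extra direction $E:=U_\alpha V_\alpha^T$ (so $U^TEV$ has $I_{|\alpha|}$ as its leading $|\alpha|\times|\alpha|$ block and zeros elsewhere), so that every $D\in\mathcal C(\overline X)$ decomposes as $D=D_0+\tau E$ with $D_0\in\mathcal C^0_\theta(\overline X,\overline S)$ and $\tau\in\Re$. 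A direct inspection of the closed-form expression for $\psi^*$ shows that adding a multiple of $E$ leaves unchanged every block of $U^TDV$ that enters $\psi^*$, so $\psi^*_{(\overline X,D)}(\overline S)=\psi^*_{(\overline X,D_0)}(\overline S)$ and in particular $\psi^*_{(\overline X,E)}(\overline S)=0$; since moreover $\psi^*_{(\overline X,D)}(\overline S)\le0$ always (\cite[Proposition 16]{Ding2017}) and $\langle\mathcal QD,\nabla^2h(\mathcal Q\overline X)\mathcal QD\rangle>0$ whenever $\mathcal QD\neq0$ (essential strict convexity of $h$), any violation of the SOSC must occur at some $D\in\mathcal C(\overline X)\backslash\{0\}$ with $\mathcal QD=0$ and $\psi^*_{(\overline X,D)}(\overline S)=0$. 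For SSRCQ $\wedge$ WSOSC $\Longrightarrow$ SOSC one then distinguishes the case $\tau=0$, where the offending $D$ already lies in $\mathcal C^0(\overline X)\backslash\{0\}$ and contradicts the WSOSC, from the case $\tau\neq0$, where Proposition \ref{3.5} is used to play the $E$-component of $D$ against the SSRCQ; for SRCQ $\wedge$ SOSC $\Longrightarrow$ SSRCQ one uses the SOSC along the $E$-direction to conclude that the extra $E$-direction cannot be appended to the left side of (\ref{psrcq}) without cost, which is exactly what upgrades (\ref{psrcq}) to (\ref{psrcq0}).

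The main obstacle is this bridge, and within it the case $\tau\neq0$: the task is to show that the single scalar parameter by which $\mathcal C_\theta(\overline X,\overline S)$ exceeds $\mathcal C^0_\theta(\overline X,\overline S)$ --- equivalently, the direction $E$ --- is immaterial once the SOSC/WSOSC and the SRCQ/SSRCQ are assumed, and it is precisely here that Propositions \ref{crit12}, \ref{3.2} and \ref{3.5} and the closed-form sigma term are indispensable. Once this bridge is in hand, the cycle (a)$\Longleftrightarrow$(b)$\Longleftrightarrow$(c)$\Longleftrightarrow$(d)$\Longleftrightarrow$(e)$\Longleftrightarrow$(f) closes.
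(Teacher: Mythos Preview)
Your handling of the two ``easy'' clusters matches the paper: (a)$\Leftrightarrow$(b) is Theorem~\ref{thiso}, and (c)$\Leftrightarrow$(d)$\Leftrightarrow$(e)$\Leftrightarrow$(f) follows from Theorems~\ref{p-srcq} and~\ref{psosc-} exactly as you say. The difference lies in the bridge (b)$\Leftrightarrow$(c) and in the ``robustly'' clause.

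The paper does \emph{not} attempt a direct argument for (b)$\Leftrightarrow$(c). Instead it notes that the spectral norm is $\mathcal C^2$-cone reducible and invokes \cite[Theorem~17]{DSZ2017} together with \cite[Proposition~3.3]{CS2018}, which for this class of problems give (a)$\Leftrightarrow$(c) outright (robust isolated calmness $\Leftrightarrow$ SRCQ $+$ SOSC). Combined with Theorem~\ref{thiso}, this yields (b)$\Leftrightarrow$(c) with no further work. The ``robustly'' part is likewise obtained from $\mathcal C^2$-cone reducibility and \cite[Theorem~17]{DSZ2017}, not from a generic RCQ-stability argument.

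Your direct route for (b)$\Leftrightarrow$(c) has the right structural insight --- the splitting $\mathcal C_\theta(\overline X,\overline S)=\mathcal C^0_\theta(\overline X,\overline S)+\Re E$ with $E=U_\alpha V_\alpha^T$ is correct (since $\alpha_1\cup\alpha_2\neq\emptyset$ always), and the invariance $\psi^*_{(\overline X,D_0+\tau E)}(\overline S)=\psi^*_{(\overline X,D_0)}(\overline S)$ does follow from the explicit formula. But the hard cases are not actually argued. For SSRCQ $\wedge$ WSOSC $\Rightarrow$ SOSC with $\tau\neq0$: the decomposition $D=D_0+\tau E$ gives $D_0\in\mathcal C^0_\theta$, but there is no reason $\mathcal B D_0\in\mathcal C_{\mathcal P}(\mathcal B\overline X-b,\bar y)$, so $D_0\notin\mathcal C^0(\overline X)$ in general and the WSOSC cannot be applied to it; ``playing the $E$-component against the SSRCQ'' is not a proof. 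For SRCQ $\wedge$ SOSC $\Rightarrow$ SSRCQ: taking polars, a witness to the failure of SSRCQ is a pair $(H_y,H_S)\neq0$ with $\mathcal B^*H_y+H_S=0$, $H_y\in\mathcal C_{\mathcal P^\circ}$, and $H_S\in(\mathcal C^0_\theta)^\circ\setminus E^\perp$ (equivalently ${\rm trace}(\widetilde{(H_S)}_{\alpha\alpha})\neq0$); it is not clear how the \emph{primal} SOSC rules this out --- the obstruction lives on the dual side, and your phrase ``SOSC along the $E$-direction'' does not identify an element of $\mathcal C(\overline X)$ on which to evaluate (\ref{psosc}). These are genuine gaps, not just missing details; if a self-contained proof along your lines exists, it needs substantially more than what you have written. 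The paper sidesteps all of this by citing the external equivalence (a)$\Leftrightarrow$(c).
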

\begin{proof}
	Since the spectral norm function $\|\cdot\|_2$ is $\mathcal C^2$-cone reducible, the ``robustly" in (i) of the Theorem \ref{th1}, which means that $S_{\rm KKT}$ is also lower semicontinuous at the origin for $(\overline X,\bar y,\overline S)$, can be obtained automatically from \cite[Theorem 17]{DSZ2017}. Besides, since the spectral norm function is Lipschitz continuous at $\overline X$, by combining Theorem \ref{p-srcq}, Theorem \ref{psosc-}, Theorem \ref{thiso}, \cite[Theorem 17]{DSZ2017} and \cite[Proposition 3.3]{CS2018}, we can obtain that $(a)\Leftrightarrow(b)\Leftrightarrow(c)\Leftrightarrow(d)\Leftrightarrow(e)\Leftrightarrow(f)$.
\end{proof}
\begin{remark}
	Proposition 3.3 in \cite{CS2018} is obtained indirectly by transforming problem (\ref{p}) into a matrix conic optimization induced by nuclear norm, and then applying \cite[Theorem 24]{DSZ2017}. In the above theorem, we give a direct proof. The difference is that we need to use the variational properties of the directional derivatives of the proximal mapping of spectral norm, while \cite{DSZ2017} uses the projection mapping.
\end{remark}

\section{Conclusion}

This paper is devoted to establishing a series of equivalent conditions of the robust ICKKTM for spectral norm regularized convex matrix optimization problems. We extend the results for nuclear norm regularized convex optimization problems in \cite{CS2018} to spectral norm regularized convex optimization problems. There is still much work to be done, such as how to apply the variational properties of proximal mapping of spectral norm function to directly prove the equivalence between (a) and (c) in Theorem \ref{th1},  whether the conclusions in this paper or in \cite{CS2018} can be extended to the Ky Fan $k$-norm regularized problems, or whether the variational properties of the spectral norm function obtained in this paper can be applied to provide stability results for the generalized variational inequality problem with the spectral norm function. These are all the topics we will study in the future.

\bibliographystyle{plain}

\end{document}